\newcommand{\trace}{\mathrm{trace}}
\newcommand{\diag}{\mathrm{diag}}
\newtheorem{theorem}{Theorem}[section]
\newtheorem{lemma}[theorem]{Lemma}
\newtheorem{remark}{Remark}
\newtheorem{cor}[theorem]{Corollary}
\theoremstyle{definition}
\newtheorem{claim}{Claim}
\newtheorem{conjecture}{Conjecture}
\numberwithin{equation}{section} 
\def\qed{\hfill$\Box$\vspace{12pt}}
\long\def\delete#1{}
\tikzstyle{vertex}=[circle, draw, inner sep=0pt, minimum size=6pt]
\tikzstyle{directed}=[postaction={decorate,
\begin{document}
\title {Bounds for the largest eigenvalue and sum of Laplacian eigenvalues of signed graphs}

\author{Linfeng Xie$^{a,b}$,~Xiaogang Liu$^{a,b,c,}$\thanks{Supported by the National Natural Science Foundation of China (No. 12371358) and the Guangdong Basic and Applied
		Basic Research Foundation (No. 2023A1515010986).}~$^,$\thanks{ Corresponding author. Email addresses: xielinfeng@mail.nwpu.edu.cn, xiaogliu@nwpu.edu.cn}~
	\\[2mm]
	{\small $^a$School of Mathematics and Statistics,}\\[-0.8ex]
	{\small Northwestern Polytechnical University, Xi'an, Shaanxi 710072, P.R.~China}\\
	{\small $^b$Xi'an-Budapest Joint Research Center for Combinatorics,}\\[-0.8ex]
	{\small Northwestern Polytechnical University, Xi'an, Shaanxi 710129, P.R. China}\\
	{\small $^c$Research \& Development Institute of Northwestern Polytechnical University in Shenzhen,}\\[-0.8ex]
	{\small Shenzhen, Guandong 518063, P.R. China}\\
}
\date{}

\openup 0.5\jot
\maketitle

\begin{abstract}
	In this paper, we consider the bounds for the largest eigenvalue and the sum of the $k$ largest Laplacian eigenvalues of signed graphs. Firstly, we give an upper bound on the largest eigenvalue of the adjacency matrix of a signed graph and characterize the extremal graphs that attain this bound. Secondly, we prove that a non-bipartite signed graph $\Gamma$ of order $n$ and size $m$ contains a balanced triangle if $\lambda_{1}(\Gamma)\ge \sqrt{m-1}$, $\lambda_{1}(\Gamma) \ge |\lambda_{n}(\Gamma)|$ and $\Gamma\not \sim (C_{5}\cup (n-5)K_{1},+)$, where $\lambda_{1}(\Gamma)$ is the largest eigenvalue of the adjacency matrix of $\Gamma$. Thirdly, we confirm a conjecture proposed in [Linear Multilinear Algebra 51 (1) (2003) 21--30] that: if $\Gamma$ is  a connected signed graph, then
	$$
	\sum_{i=1}^{k}\mu_{i}(\Gamma) >\sum_{i=1}^{k}d_{i}(\Gamma)~~(1\le k\le n-1),
	$$
	where $\mu_{1}(\Gamma)\ge\mu_{2}(\Gamma)\ge\cdots \ge \mu_{n}(\Gamma)$ are Laplacian eigenvalues of $\Gamma$, and $d_{1}(\Gamma)\ge d_{2}(\Gamma)\ge \dots \ge d_{n}(\Gamma)$ are vertex degrees of $\Gamma$. Finally, we give a lower bound for the sum of the $k$ largest Laplacian eigenvalues of a connected signed graph.
	
	\smallskip
	
	\emph{Keywords:} Laplacian eigenvalues; Eigenvalues; Signed graph; Vertex degree.
	
	\emph{Mathematics Subject Classification (2020):} 05C50, 05C22, 05C07
\end{abstract}

\section{Introduction}
A \emph{signed graph} $\Gamma=(G,\sigma)$ consists of an unsigned graph $G=(V(G),E(G))$ and a sign function $\sigma : E  \to \left \{+1, -1 \right \}$, where $G$ is called the \emph{underlying graph} of $\Gamma$. In 1946, Heider first introduced the concept of signed graphs in his study of balance theory in social psychology \cite{Hei}. Heider sought to explain how friendly/hostile relationships in social networks influence group stability. In 1953, Harary formalized this theory and established the mathematical foundation of signed graphs \cite{Har}. For more information of signed graphs, please refer to \cite{Cam, Zaslavsky1982, Zaslavsky2008,Zaslavsky2018}.

Let $\Gamma=(G,\sigma)$ be a signed graph with the vertex set $V(\Gamma) =  \left \{v_{1},v_{2},\dots ,v_{n}\right \}$. An edge $uv$ labeled with ``$+1$" (respectively, ``$-1$") is called a \emph{positive edge} (respectively, \emph{negative edge}), denoted by $\sigma(uv)=+1 $ (respectively, $\sigma(uv)=- 1$). All positive edges (respectively, negative edges) in $\Gamma$ are denoted by $E^{+}(\Gamma)$ (respectively, $E^{-}(\Gamma)$). The \emph{adjacency matrix} of $\Gamma$ is denoted by $A(\Gamma)=(a_{ij}^{\sigma})$, where $a_{ij}^{\sigma}=\sigma(v_{i}v_{j})a_{ij}$ and $a_{ij}=1$ if $ v_{i} $ and $v_{j}$ are adjacent, and $a_{ij}=0$ otherwise. The \emph{degree matrix} of $\Gamma$ is the diagonal matrix $D(\Gamma)=\diag(d(v_{1}), d(v_{2}), \dots , d(v_{n}))$, where $d(v_i)$ denotes the number of edges incident to $v_i$. The \emph{Laplacian matrix} of $\Gamma$, denoted by $L(\Gamma)$ or $L(G,\sigma)$, is defined as $D(\Gamma)-A(\Gamma)$. Let  $\lambda_{1}(\Gamma)\ge \lambda_{2}(\Gamma)\ge \dots \ge \lambda_{n}(\Gamma)$ and $\mu_{1}(\Gamma)\ge \mu_{2}(\Gamma)\ge \dots \ge \mu_{n}(\Gamma)$ denote the eigenvalues of $A(\Gamma)$ and $L(\Gamma)$, which are called the \emph{adjacency spectrum} and  \emph{Laplacian spectrum} of $\Gamma$, respectively. Until now, there are lots of results on the spectra of signed graphs \cite{Hou, Bel, Ping, Reff, Liu, Yao, Belardo, And, Wang-Lin}.

Let $\theta : V(\Gamma) \to$ $\left \{+1 , - 1\right \}$ be a \emph{sign function}. \emph{Switching $\Gamma$ by a sign function $\theta$} means forming a new signed graph $\Gamma^{\theta}=(G,\sigma^{\theta})$ whose underlying graph is the same as $G$, and the sign function on an edge $e=v_{i}v_{j}$ is defined by $\sigma^{\theta}(e)=\theta(v_{i})\sigma(e)\theta(v_{j})$. Let $S\subseteq V(\Gamma)$. \emph{Switching $\Gamma$ at a vertex subset $S$} means changing the sign of every edge with exactly one end in $S$. In fact, switching $\Gamma$ by a sign function $\theta$ is equivalent to switching $\Gamma$ at a vertex subset $S=\{v\in V(\Gamma) \mid \theta(v)=-1\}$. Let $\Gamma_{1}=(G,\sigma_{1})$ and $\Gamma_{2}=(G,\sigma_{2})$ be two signed graphs with the same underlying graph $G$. If there exists a sign function $\theta$ such that $\Gamma_{2}=\Gamma^{\theta}_{1}$, then $\Gamma_{1}$ and $\Gamma_{2}$ are \emph{switching~equivalent}, denoted as $\Gamma_{1} \sim \Gamma_{2}$. If not, then $\Gamma_{1} \not\sim \Gamma_{2}$. Here, $\theta$ is also called a \emph{switching~function}. A cycle is called  \emph{positive}  or \emph{balanced} if the number of its negative edges is even; otherwise, \emph{negative}  or \emph{unbalanced}. A signed graph is called \emph{balanced} if each of its cycles is positive; otherwise, \emph{unbalanced}. Two $n\times n$ matrices $M_{1}$ and $M_{2}$ are called \emph{signature similar}, if there exists a diagonal matrix $S=\diag(s_{1},s_{2},\dots,s_{n})$  with diagonal entries $s_{i}=\pm 1$ such that $M_{2}=SM_{1}S^{-1}$. Clearly, two signature similar matrices have the same eigenvalues.

Let $G=(V(G),E(G))$ be a simple graph. Let $\omega(G)$ denote the \emph{clique number} of $G$, which is the order of the largest clique of $G$ and $\lambda_{1}(G)$ the largest adjacency eigenvalue of $G$.  In \cite{Wilf-1986}, Wilf proved that
$$
\lambda_{1}(G)\le \frac{\omega(G)-1}{\omega(G)}n,
$$
where $n$ is the order of $G$.  In \cite{Nik2}, Nikiforov proved that
$$
\lambda_{1}(G)\le \sqrt{2m\frac{\omega(G)-1}{\omega(G)}},
$$
where $m$ is the size of $G$. Let $c(e)$ denote the order of the largest clique containing an edge $e$ in $G$. Recently, Liu and Ning \cite{LB-ar-2023} proved that
\begin{equation}\label{LB-JCTB-2026}
	\lambda_{1}(G)\le \sqrt{2\sum_{e\in E(G)}\frac{c(e)-1}{c(e)}},
\end{equation}
where the equality of \eqref{LB-JCTB-2026} holds if and only if $G$ is a complete bipartite graph for $\omega=2$, or a complete regular $\omega$-partite graph for $\omega\ge 3$ $($possibly with some isolated vertices$)$.

There are analogous results on signed graphs. Let $\omega_{b}(\Gamma)$ denote the \emph{balanced clique number} of a signed graph $\Gamma$, which
is the order of the largest balanced clique of $\Gamma$. In  \cite{Sun}, Sun, Liu and Lan proved that
$$
\lambda_{1}(\Gamma)\le \sqrt{2m\frac{\omega_{b}(\Gamma)-1}{\omega_{b}(\Gamma)}},
$$
where $m$ is the size of a signed graph $\Gamma$. Subsequently, in \cite{Kan}, Kannan and Pragada proved that
\begin{equation}\label{Kan-Th-3.3}
	\lambda_{1}^{2}(\Gamma)\le 2(m-\epsilon(\Gamma))\frac{\omega_{b}(\Gamma)-1}{\omega_{b}(\Gamma)},
\end{equation}
where $\epsilon(\Gamma)$ denotes the \emph{frustration index} of a signed graph $\Gamma$, which is the
minimum number of edges to remove for balance.

In this paper, we firstly give an upper bound on $\lambda_{1}(\Gamma)$, as shown in Theorem \ref{AD1}, which is better than \eqref{Kan-Th-3.3} (See Remark \ref{rem1-1-1}).

\begin{theorem}\label{AD1}
	Let $\Gamma=(G,\sigma)$ be a signed graph of order $n$ with $\omega_{b}(\Gamma)=\omega_{b} \ge 2$. Let $c_{b}(e)$ denote the order of the largest balanced clique containing an edge $e$ in $\Gamma$. Then
	\begin{equation}\label{1}
		\lambda_{1}(\Gamma) \le \sqrt{2\sum_{e\in E^{+}(\Gamma^{\prime})}\frac{c_{b}(e)-1}{c_{b}(e)}},
	\end{equation}
	where  $\Gamma^{\prime} \sim \Gamma$ and $\lambda_{1}(\Gamma^{\prime})$ has a non-negative eigenvector. Equality holds if and only if $\Gamma$ is a balanced complete bipartite graph for $\omega_{b}=2$, or a  balanced complete regular $\omega_{b}$-partite graph for $\omega_{b}\ge 3$ $($possibly with some isolated vertices$)$.
\end{theorem}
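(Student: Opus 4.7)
The plan is to reduce Theorem \ref{AD1} to the unsigned Liu--Ning bound \eqref{LB-JCTB-2026} via a switching trick. The three main ingredients will be: first, produce a non-negative eigenvector for $\lambda_{1}$ by switching; second, bound the associated Rayleigh quotient by the largest eigenvalue of the unsigned ``positive subgraph'' $G^{+}:=(V(\Gamma'),E^{+}(\Gamma'))$; and third, pass from the clique sizes in $G^{+}$ to the balanced clique sizes of $\Gamma$ using the monotonicity of $t\mapsto 1-1/t$.

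I would start by letting $\mathbf{y}$ be a unit eigenvector of $A(\Gamma)$ for $\lambda_{1}(\Gamma)$ and defining a switching function $\theta$ by $\theta(v_{i})=+1$ if $y_{i}\ge 0$ and $\theta(v_{i})=-1$ otherwise. Setting $\Gamma':=\Gamma^{\theta}$ and $S:=\diag(\theta(v_{1}),\dots,\theta(v_{n}))$, the signature-similarity $A(\Gamma')=SA(\Gamma)S^{-1}$ makes $\mathbf{x}:=S\mathbf{y}=(|y_{1}|,\dots,|y_{n}|)^{\top}$ a non-negative unit eigenvector of $A(\Gamma')$ for $\lambda_{1}(\Gamma')=\lambda_{1}(\Gamma)$; this is the graph $\Gamma'$ demanded by the statement. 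Since $\mathbf{x}\ge 0$, negative edges of $\Gamma'$ contribute non-positively to the Rayleigh quotient, so
\[
\lambda_{1}(\Gamma)\,\|\mathbf{x}\|^{2}
=\mathbf{x}^{\top}A(\Gamma')\mathbf{x}
=2\!\!\sum_{uv\in E^{+}(\Gamma')}\!\! x_{u}x_{v}\;-\;2\!\!\sum_{uv\in E^{-}(\Gamma')}\!\! x_{u}x_{v}
\;\le\; \mathbf{x}^{\top}A(G^{+})\mathbf{x}\;\le\; \lambda_{1}(G^{+})\,\|\mathbf{x}\|^{2}.
\]

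Next I apply Liu--Ning's inequality \eqref{LB-JCTB-2026} to the unsigned graph $G^{+}$, which gives $\lambda_{1}(G^{+})\le\sqrt{2\sum_{e\in E(G^{+})}(c_{G^{+}}(e)-1)/c_{G^{+}}(e)}$, where $c_{G^{+}}(e)$ is the order of the largest clique of $G^{+}$ containing $e$. A clique of $G^{+}$ is a vertex set whose pairwise edges are all positive in $\Gamma'$, hence an all-positive (and therefore balanced) clique of $\Gamma'$; since balancedness is a switching invariant, it remains a balanced clique of $\Gamma$ of the same order, so $c_{G^{+}}(e)\le c_{b}(e)$ for every $e\in E^{+}(\Gamma')$. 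Because $t\mapsto 1-1/t$ is increasing, the bound is then dominated by $\sqrt{2\sum_{e\in E^{+}(\Gamma')}(c_{b}(e)-1)/c_{b}(e)}$, which is \eqref{1}.

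For the equality characterization each of the three inequalities above must become an equality. Equality in the Rayleigh-quotient step forces $x_{u}x_{v}=0$ for every $uv\in E^{-}(\Gamma')$ and makes $\mathbf{x}$ a Perron eigenvector of $G^{+}$; equality in Liu--Ning forces $G^{+}$ to be a complete regular $\omega_{b}$-partite graph (a complete bipartite graph when $\omega_{b}=2$) together with some isolated vertices; and $c_{G^{+}}(e)=c_{b}(e)$ for every positive edge $e$ prevents any ``hidden'' negative edge from enlarging a balanced clique of $\Gamma$. Putting these together, every negative edge of $\Gamma'$ must be incident with a vertex at which $\mathbf{x}$ vanishes, and a further switching on the zero-support removes those edges, so that $\Gamma$ is switching-equivalent to the claimed extremal signed graph. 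The main obstacle I anticipate is this equality analysis: one has to carefully treat the vertices of zero Perron weight (which appear as the isolated vertices in the Liu--Ning extremal structure) and rule out residual negative edges with an additional switching step; the forward direction of \eqref{1} itself follows essentially mechanically from the switching trick, the $\mathbf{x}\ge 0$ comparison with $G^{+}$, and the monotonicity of $1-1/t$.
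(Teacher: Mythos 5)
Your derivation of the inequality \eqref{1} is correct but takes a genuinely different route from the paper's. You obtain $\lambda_1(\Gamma)=\lambda_1(\Gamma')\le\lambda_1(G^{+})$ from the non-negative eigenvector and then quote the unsigned Liu--Ning bound \eqref{LB-JCTB-2026} for $G^{+}$, finishing with $c_{G^{+}}(e)\le c_b(e)$ and the monotonicity of $t\mapsto (t-1)/t$. The paper instead reproves a signed Motzkin--Straus-type statement from scratch: it introduces the weight matrix $W_\Gamma$ with entries $\sigma(ij)\,c_b(e)/(c_b(e)-1)$, shows by a minimal-support maximizer argument (its Claims 1--2, resting on \cite[Theorem 5]{Wang-Yan-Qian}) that $\mathbf{z}^{T}W_\Gamma\mathbf{z}\le 1$ on the $\ell_1$-sphere, and then applies Cauchy--Schwarz to the squared Perron weights $y_i=z_i^{2}$. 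Your reduction is shorter and gets the inequality essentially for free from the unsigned case; the paper's argument is more self-contained in that it does not use the Liu--Ning inequality itself (though it does invoke their equality characterization later). Both proofs share the initial switching step (the paper's Lemma 2.2).

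The equality analysis is where your proposal has a genuine gap. The phrase ``a further switching on the zero-support removes those edges'' cannot work as stated: switching changes signs of edges but never deletes them. More substantively, your three equality conditions do not exclude negative edges of $\Gamma'$ joining two vertices of zero Perron weight: such an edge contributes nothing to the Rayleigh quotient, does not lie in $E^{+}(\Gamma')$, and so is invisible to every inequality in your chain. Concretely, take $\Gamma'=(K_{2,2},+)\cup(K_2,-)$: then $\lambda_1(\Gamma')=2$ with non-negative eigenvector supported on $K_{2,2}$, the right-hand side of \eqref{1} equals $\sqrt{2\cdot 4\cdot\tfrac12}=2$, and all three of your inequalities are equalities, yet the underlying graph is not a complete bipartite graph with isolated vertices. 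So conditions (a)--(c) in your sketch genuinely do not suffice, and an extra argument is needed to kill edges inside the zero set (this also shows the equality statement is sensitive to which switch $\Gamma'$ one takes; the paper implicitly uses an all-positive switch when $\Gamma$ is balanced). The paper disposes of the equality case by a different mechanism: using the Liu--Ning equality characterization it reduces to showing no unbalanced $\Gamma$ attains equality, and then Lemma \ref{LanCor-S} gives a contradiction, since deleting the edge set $S$ underlying $E^{-}(\Gamma')$ leaves a balanced graph and hence forces the strict inequality $\lambda_1(\Gamma)<\lambda_1(\Gamma-S)$, while equality in \eqref{1} would force $\lambda_1(\Gamma)=\lambda_1(\Gamma-S)$. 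You should either adopt that deletion argument or carry out the zero-weight analysis explicitly (eigenvector equation at zero-weight vertices, plus a separate treatment of edges lying entirely inside the zero set).
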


Let $G$ be a simple graph of size $m$ and $t(G)$ the number of triangles in $G$.  In \cite[Theorem~2.1]{Nik2}, Nikiforov proved that if $\lambda_{1}(G) \ge \sqrt{m}$, then $t(G)\ge 1$ unless $G$ is a complete bipartite graph. In \cite[Theorem 1.3]{Bo}, Lin, Ning and Wu confirmed that if $G$ is a non-bipartite graph of size $m$ and $\lambda_{1}(G) \ge \sqrt{m-1}$, then $t(G)\ge 1$  unless $G$ is $C_{5}$ (possibly with isolated vertices). Let $SK_{s,t}$ denote the graph obtained by subdividing an edge of the complete bipartite graph $K_{s,t}$. In \cite[Theorem 1.4]{Bo}, Lin, Ning and Wu proved that if $G$ is a non-bipartite graph of order $n$ and $\lambda_{1}(G)\ge \lambda_1(SK_{\lfloor \frac{n-1}{2} \rfloor,\lceil \frac{n-1}{2}\rceil})$, then $t(G)\ge 1$ unless $G$ is $SK_{\lfloor \frac{n-1}{2} \rfloor,\lceil \frac{n-1}{2}\rceil}$. In \cite[Theorem 1.4]{Zhai}, Zhai and Shu proved that if $G$ is a non-bipartite graph of size $m$ and $\lambda_{1}(G)\ge \lambda_1(SK_{2,\frac{m-1}{2}})$, then $t(G)\ge 1$ unless $G$ is $SK_{2,\frac{m-1}{2}}$.  For more results on this research, see \cite{Li-Feng-Peng, Wang22, Bo-Zhai}.

Let $(G, +)$ denote a signed graph with all positive edges. The signed graph $\Gamma^{1,n-3}$ is obtained from a signed clique $(K_{n-2},+)$ with $V(K_{n-2})=\left \{u_{1},v_{1},\dots,v_{n-3} \right \}$ and two isolated vertices $u$ and $v$ by adding a negative edge $uv$ and positive edges $\left \{uu_{1},vv_{1},\dots,vv_{n-3}\right \}$. In \cite{Wang-Hou-Li}, Wang, Hou and Li proved that  if $\Gamma$ is a connected unbalanced signed graph of order $n$ and $\Gamma$ is $C_3^-$-free, then
$$
\rho(\Gamma)\le \frac{1}{2}\sqrt{n^{2}-8}+n-4,
$$
with equality holding if and only if $\Gamma \sim \Gamma^{1,n-3}$, where $C_3^-$ denotes the unbalanced triangle and $\rho(\Gamma)=\max\left \{\lambda_{1}(\Gamma),-\lambda_{n}(\Gamma)  \right \}$.

In this paper, we secondly establish the existence of  balanced triangle based on a lower bound on the largest eigenvalue of a signed graph, as shown in Theorem \ref{AD2}.

\begin{theorem}\label{AD2}
	Let $\Gamma=(G,\sigma)$ be a non-bipartite  signed graph of order $n$ and size $m$. If $\lambda_{1}(\Gamma) \ge \sqrt{m-1}$ and $\lambda_{1}(\Gamma) \ge |\lambda_{n}(\Gamma)|$, then $\Gamma$ contains a balanced triangle unless $\Gamma \sim (C_{5}\cup (n-5)K_{1},+)$.	
\end{theorem}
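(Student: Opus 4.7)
The plan is to reduce to the unsigned triangle-existence theorem of Lin, Ning and Wu \cite[Theorem~1.3]{Bo} and then use Theorem~\ref{AD1} to eliminate the remaining signed obstructions. First I would observe that $|A(\Gamma)|$ agrees with $A(G)$ entrywise, so Perron--Frobenius gives $\rho(\Gamma)\le\rho(G)=\lambda_{1}(G)$, while the hypothesis $\lambda_{1}(\Gamma)\ge|\lambda_{n}(\Gamma)|$ forces $\lambda_{1}(\Gamma)=\rho(\Gamma)$. Hence $\lambda_{1}(G)\ge\sqrt{m-1}$, and since $\Gamma$ is non-bipartite so is $G$, so Lin--Ning--Wu yields either a triangle in $G$ or $G\cong C_{5}\cup(n-5)K_{1}$. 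In the latter case $m=5$ and $\sqrt{m-1}=2$; a direct computation on signed $5$-cycles shows $\lambda_{1}=2$ when the cycle is balanced and $\lambda_{1}=2\cos(\pi/5)<2$ otherwise, so the hypothesis forces the $5$-cycle of $\Gamma$ to be balanced, giving $\Gamma\sim(C_{5}\cup(n-5)K_{1},+)$, which is exactly the exception in the statement.

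Assume henceforth that $G$ contains a triangle, and suppose for contradiction that $\Gamma$ has no balanced triangle. Then $\omega_{b}(\Gamma)=2$ and $c_{b}(e)=2$ for every edge. Because $\lambda_{1}(\Gamma)=\rho(\Gamma)$, one may switch $\Gamma$ to $\Gamma'\sim\Gamma$ whose Perron eigenvector is non-negative, and Theorem~\ref{AD1} then yields
\[
\lambda_{1}(\Gamma)^{2}\le 2\sum_{e\in E^{+}(\Gamma')}\frac{1}{2}=|E^{+}(\Gamma')|,
\]
so that $|E^{-}(\Gamma')|\le 1$. If $\Gamma'$ has no negative edge, or if its unique negative edge is a bridge, then every cycle of $\Gamma'$ is positive, $\Gamma'$ (hence $\Gamma$) is balanced, and every triangle of $G$ becomes a balanced triangle of $\Gamma$, a contradiction. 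Otherwise $\Gamma'$ has exactly one negative edge lying on a cycle, $\Gamma$ is unbalanced, and the displayed inequality is attained with equality; the sharp equality clause of Theorem~\ref{AD1} then forces $\Gamma$ to be a balanced complete bipartite graph (possibly with isolated vertices), which is incompatible with $G$ containing a triangle.

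The crux of the argument is this last subcase. Without the sharp equality characterization of Theorem~\ref{AD1} one could not rule out ``almost-balanced'' configurations carrying a single essential negative edge, and the whole reduction rests on that sharpness; the initial spectral comparison $\lambda_{1}(\Gamma)\le\lambda_{1}(G)$ merely puts us in position to invoke the unsigned result, and the $C_{5}$ computation is routine once one recognises the two possible spectra of signed five-cycles. Given the sharpness of Theorem~\ref{AD1}, the remaining work is a clean case distinction between the exceptional $C_{5}$ structure and the generic case in which $G$ contains a triangle.
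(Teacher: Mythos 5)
Your proof is correct, but it takes a genuinely different and considerably shorter route than the paper's. The paper first disposes of the balanced case via \cite[Theorem~1.3]{Bo} and then runs a long case analysis on unbalanced $\Gamma$ split by $\lambda_{2}(\Gamma)\ge 1$ versus $\lambda_{2}(\Gamma)<1$: the first case uses the Kannan--Pragada bound $\lambda_{1}^{2}+\lambda_{2}^{2}\le m$ together with a $p$-norm majorization of the spectrum to rule out all triangles, and the second is a structural argument (shortest odd cycle has length $5$, forbidden induced subgraphs $H_{1},H_{2},H_{3}$, and the Hoffman--Smith internal-path lemma) ending in a subdivided complete bipartite graph. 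You instead observe that $\lambda_{1}(\Gamma)\le\rho(\Gamma)\le\lambda_{1}(G)$, invoke Lin--Ning--Wu on the underlying graph to get either a triangle in $G$ or the $C_{5}$ exception (which you settle correctly via the two spectra of signed $5$-cycles), and then, when $G$ has a triangle but $\Gamma$ has no balanced one, use $c_{b}(e)\equiv 2$ in Theorem~\ref{AD1} to force $|E^{-}(\Gamma')|\le 1$ and finish with the equality clause of that theorem (a complete bipartite graph is triangle-free). The argument is sound: every triangle of a balanced signed graph is balanced, $\omega_{b}=2$ does follow from the absence of balanced triangles, and the chain $m-1\le\lambda_{1}^{2}(\Gamma)\le|E^{+}(\Gamma')|$ does pin down equality when $|E^{-}(\Gamma')|=1$. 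Two remarks: your proof leans entirely on the sharpness statement of Theorem~\ref{AD1}, which the paper's proof of Theorem~\ref{AD2} never uses; and, notably, you never actually use the hypothesis $\lambda_{1}(\Gamma)\ge|\lambda_{n}(\Gamma)|$ (the inequality $\lambda_{1}(\Gamma)\le\lambda_{1}(G)$ holds unconditionally since $\lambda_{1}(\Gamma)\le\rho(\Gamma)$ always), so your argument in fact establishes a formally stronger statement than the one in the paper --- worth double-checking, but I see no error.
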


Let $G$ be a graph of order $n$ with Laplacian eigenvalues $\mu_{1}(G)\ge\mu_{2}(G)\ge\cdots \ge \mu_{n}(G)$ and vertex degrees $d_{1}(G)\ge d_{2}(G)\ge \dots \ge d_{n}(G)$.	In 1994, Grone and Merris proposed the following conjecture.

\begin{conjecture}(See \cite{GM-SIAM-1994})\label{GM-SIAM-1994}
	{ \em Let $G$ be a connected graph of order $n$. Then
		$$
		\sum_{i=1}^{k}\mu_{i}(G)\ge 1+\sum_{i=1}^{k}d_{i}(G)~~(1\le k\le n).
		$$
	}
\end{conjecture}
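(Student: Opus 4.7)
The plan is to use the Ky Fan variational characterization combined with Schur's majorization. By Ky Fan's principle, $\sum_{i=1}^{k}\mu_{i}(G)=\max_{U^{\top}U=I_{k}}\Tr(U^{\top}L(G)U)$, where $U$ ranges over $n\times k$ matrices with orthonormal columns; equivalently, this is the maximum over $k$-dimensional subspaces $W\subseteq\mathbb{R}^{n}$ of $\Tr(P_{W}L(G)P_{W})$. Choosing $W$ to be the span of the indicator vectors of the $k$ vertices of largest degree already yields Schur's weaker bound $\sum_{i=1}^{k}\mu_{i}(G)\ge\sum_{i=1}^{k}d_{i}(G)$. The task is therefore to exhibit a single $k$-dimensional subspace whose trace on $L(G)$ exceeds the sum of the top $k$ diagonal entries by at least $1$. (Note that the inequality is only meaningful for $1\le k\le n-1$, since at $k=n$ both traces equal $2m$ and the extra $+1$ fails; the conjecture should be read with this restriction.)

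To produce the extra $+1$, I would exploit the connectedness of $G$. Let $V_{k}=\{v_{1},\dots,v_{k}\}$ denote the set of $k$ vertices of largest degree. Since $G$ is connected and $k\le n-1$, there exists an edge $uv$ with $u\in V_{k}$ and $v\notin V_{k}$. Consider the $2\times 2$ principal submatrix of $L(G)$ indexed by $\{u,v\}$, namely $M=\bigl(\begin{smallmatrix} d(u) & -1\\ -1 & d(v)\end{smallmatrix}\bigr)$, whose largest eigenvalue is $\lambda_{1}(M)=\tfrac{1}{2}\bigl(d(u)+d(v)+\sqrt{(d(u)-d(v))^{2}+4}\bigr)$. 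Take $W$ to be the span of the coordinate vectors $e_{v_{j}}$ for $v_{j}\in V_{k}\setminus\{u\}$ together with the top eigenvector of $M$ extended by zeros outside $\{u,v\}$. A direct computation, using that $L(G)$ restricted to $\mathrm{span}\{e_{u},e_{v}\}$ acts by the quadratic form of $M$, gives $\Tr(P_{W}L(G)P_{W})=\sum_{v_{j}\in V_{k}\setminus\{u\}}d(v_{j})+\lambda_{1}(M)$, so the required bound reduces to showing $\lambda_{1}(M)\ge d(u)+1$.

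The main obstacle is precisely this reduction: a short calculation shows $\lambda_{1}(M)\ge d(u)+1$ exactly when $d(v)\ge d(u)$, whereas the condition $v\notin V_{k}$ only guarantees $d(v)\le d(v_{k})\le d(u)$, so in the unfavourable regime the single-edge gadget delivers an improvement strictly less than $1$. To close the gap I would either (i) replace the $2\times 2$ gadget by a larger $(k+\ell)\times(k+\ell)$ principal submatrix of $L(G)$ indexed by $V_{k}$ together with $\ell$ carefully chosen outside vertices, apply Cauchy interlacing, and amortize many sub-unit contributions from edges crossing the cut $(V_{k},V(G)\setminus V_{k})$; or (ii) proceed by induction on $k$, taking as base case $k=1$ Grone's 1995 theorem $\mu_{1}(G)\ge d_{1}(G)+1$. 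Converting a collection of small local gains into a single unit of global gain using only the connectedness hypothesis is the genuinely hard part, and it is likely where any successful proof will require a new structural idea beyond the two-vertex gadget.
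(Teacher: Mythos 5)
Your proposal does not close the argument, and you say so yourself: the two--vertex gadget gives $\lambda_{1}(M)\ge d(u)+1$ precisely when $d(v)\ge d(u)$, which is the opposite of what the choice $v\notin V_{k}$ guarantees, and neither of the two repairs you sketch (a larger interlacing gadget, or an induction on $k$) is actually carried out. As written the construction only recovers Schur's bound plus a quantity that may be strictly smaller than $1$, so there is a genuine gap. (You are right, however, that the range should be read as $1\le k\le n-1$; at $k=n$ both sides equal $2m$ up to the additive $1$, so the inequality as printed fails there.)

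The missing idea is to stop testing $L(G)=PP^{T}$ against vertex-indexed vectors and instead test the edge-indexed Gram matrix $K(G)=P^{T}P$, which has the same nonzero eigenvalues, against indicator vectors of stars. If $\mathbf{x}(u)$ is the indicator of the $d(u)$ edges incident with $u$, oriented with $u$ as head, then
$$
\frac{\mathbf{x}(u)^{T}K(G)\mathbf{x}(u)}{\|\mathbf{x}(u)\|^{2}}
=\frac{2d(u)+d(u)(d(u)-1)}{d(u)}=d(u)+1,
$$
so the extra $+1$ already appears for a single vertex; there is no need to amortize sub-unit gains across the cut, and the obstruction that remains is adjacency \emph{among} the chosen vertices (overlapping stars), not the degree comparison you run into. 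This is exactly the computation in the paper's proof of Theorem~\ref{LP1}, which is the signed analogue of the original argument of Grone and Merris \cite{GM-SIAM-1994}. For the conjecture itself the paper proves nothing new: it simply invokes Grone's stronger Theorem~\ref{Grone}, in which the additive term is the number $t_{k}\ge 1$ of components induced on the top $k$ vertices, and the $+1$ bound follows at once for every $k\le n-1$.
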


Subsequently, in 1995, Grone proved the following stronger result, which confirmed Conjecture \ref{GM-SIAM-1994}.

\begin{theorem}\emph{(See \cite{Grone})}\label{Grone}
	Let $G$ be a simple connected graph of order $n$. For each $k=1,2,\dots,n-1$, let $t_{k}$ be the number of components of the graph induced by $G$ on $\left \{1,2,\dots,k  \right \}$. Then
	$$
	\sum_{i=1}^{k}\mu_{i}(G)\ge t_{k}+\sum_{i=1}^{k}d_{i}(G).
	$$
\end{theorem}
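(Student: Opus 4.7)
The plan is to combine Cauchy's interlacing theorem applied to a principal submatrix of $L(G)$ with a refinement that exploits the connectivity of $G$ to produce the additional $+t_{k}$ term.

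First, I would set $S=\{v_{1},\dots,v_{k}\}$ to be the $k$ highest-degree vertices, let $H=G[S]$ with connected components $C_{1},\dots,C_{t_{k}}$, and let $B=L(G)[S,S]$ denote the $k\times k$ principal submatrix of $L(G)$ indexed by $S$. A block decomposition gives $B=\bigoplus_{j=1}^{t_{k}} B_{j}$, where $B_{j}=L(G[C_{j}])+D_{j}^{\mathrm{ext}}$ and $D_{j}^{\mathrm{ext}}$ is the diagonal matrix of external degrees (number of edges from each $v\in C_{j}$ to $V(G)\setminus S$); in particular $\operatorname{tr}(B)=\sum_{i=1}^{k}d_{i}(G)$. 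Cauchy's interlacing theorem applied to $B$ as a principal submatrix of the symmetric matrix $L(G)$ then yields $\sum_{i=1}^{k}\mu_{i}(G)\ge\sum_{i=1}^{k}\lambda_{i}(B)=\operatorname{tr}(B)=\sum_{i=1}^{k}d_{i}(G)$, which already establishes the theorem but is missing the crucial $+t_{k}$.

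To recover the $+t_{k}$, I would pass from Cauchy interlacing to the Ky Fan maximum principle $\sum_{i=1}^{k}\mu_{i}(G)=\max\{\operatorname{tr}(U^{\top}L(G)U):U\in\mathbb{R}^{n\times k},\ U^{\top}U=I_{k}\}$ and construct $U$ by perturbing the standard basis on $S$ into the boundary. Since $G$ is connected and $k\le n-1$, each component $C_{j}$ has at least one edge leaving $S$, say $v_{j}w_{j}$ with $v_{j}\in C_{j}$ and $w_{j}\in V(G)\setminus S$. My candidate construction is to replace, inside each $\mathbb{R}^{C_{j}}$-slot of $U$, the normalized indicator $|C_{j}|^{-1/2}\mathbf{1}_{C_{j}}$ (completed to an orthonormal basis of $\mathbb{R}^{C_{j}}$ by its mean-zero complement) by a tilt $\cos\theta_{j}\,|C_{j}|^{-1/2}\mathbf{1}_{C_{j}}-\sin\theta_{j}\,e_{w_{j}}$ toward the boundary vertex. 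Because the cross term $|C_{j}|^{-1/2}\mathbf{1}_{C_{j}}^{\top}L(G)e_{w_{j}}$ is strictly nonzero (it counts minus the number of edges from $w_{j}$ into $C_{j}$), an appropriately signed small $\theta_{j}$ produces a strictly positive gain in $\operatorname{tr}(U^{\top}L(G)U)$ over $\operatorname{tr}(B)$.

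The main obstacle will be showing that the aggregate gain across all $t_{k}$ components is at least $t_{k}$, in the face of two nuisances: the chosen external neighbors $w_{j}$ may coincide across components (so the naive tilts need not be pairwise orthogonal), and the per-component gain from a single-vertex rotation can fall short of $1$ when the boundary to $w_{j}$ is small. The cleanest way forward is probably to avoid the explicit rotation altogether and instead prove a structural refinement of Cauchy interlacing: for a positive semidefinite block matrix $M=\bigl(\begin{smallmatrix}B & E \\ E^{\top} & D\end{smallmatrix}\bigr)$, the eigenvalue sum $\sum_{i=1}^{k}\lambda_{i}(M)$ exceeds $\operatorname{tr}(B)$ by an amount that encodes the component structure of the support pattern of $E$, and then apply it to $L(G)$ with the block split induced by $S$. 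Establishing this refinement, and matching its "component quantity" with exactly $t_{k}$, is where I expect the genuine difficulty of the theorem to sit.
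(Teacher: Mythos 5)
Your opening step is correct but only reproduces the weak bound: Cauchy interlacing (equivalently, the Ky Fan principle with the standard basis vectors indexed by $S$) gives $\sum_{i=1}^{k}\mu_{i}(G)\ge \mathrm{tr}(B)=\sum_{i=1}^{k}d_{i}(G)$, and the entire content of Theorem \ref{Grone} is the additional $t_{k}$. That is exactly where your argument stops being a proof. The single-vector tilt cannot deliver a gain of $1$ per component: the improvement obtained by rotating $|C_{j}|^{-1/2}\mathbf{1}_{C_{j}}$ toward $e_{w_{j}}$ is at most the excess of the top eigenvalue of a $2\times 2$ compression over its $(1,1)$ entry, and this excess can be arbitrarily small; for instance, if $|C_{j}|$ is large and only one edge leaves $C_{j}$, the cross term is $|C_{j}|^{-1/2}$ and the achievable gain is $O(1/|C_{j}|)$. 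You acknowledge this and the non-orthogonality problem when the $w_{j}$ coincide, and then retreat to an unproved ``structural refinement of Cauchy interlacing''; as stated, that refinement \emph{is} the theorem, so nothing beyond the trivial bound has been established.

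The proof this paper relies on (Grone's, reproduced in the same form for the signed analogue in Theorem \ref{LP1}) works in the edge space rather than the vertex space, and that is what makes the $+1$ per component appear without any perturbation. Write $L(G)=PP^{T}$ for an oriented incidence matrix and pass to $K=P^{T}P$, which has the same nonzero eigenvalues. Choose the orientation so that each selected vertex $u$ is the head of all (or, for a controlled set of exceptions governed by the component structure of $G[S]$, all but one) of its incident edges, and let $\mathbf{x}(u)\in\mathbb{R}^{E}$ be the indicator of the edges headed at $u$. These vectors are pairwise orthogonal by construction, and the key computation is $\mathbf{x}(u)^{T}K\mathbf{x}(u)=\lVert P\mathbf{x}(u)\rVert^{2}=d(u)^{2}+d(u)$, because each neighbor of $u$ contributes an extra unit to a coordinate of $P\mathbf{x}(u)$; after normalizing by $\lVert\mathbf{x}(u)\rVert^{2}=d(u)$ the Rayleigh quotient is $d(u)+1$, dropping to $d(u)$ only for the excepted vertices. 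Summing and applying the trace-maximum principle (Lemma \ref{Cvt}) then yields $\sum_{i=1}^{k}\mu_{i}(G)\ge\sum_{i=1}^{k}d_{i}(G)+t_{k}$, with $t_{k}$ counted exactly by the orientation bookkeeping. To repair your write-up you would either have to prove your proposed refinement of interlacing, which is not easier than the theorem itself, or switch to this incidence-matrix argument.
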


However, the above conclusion does not hold for signed graphs. In 2003, Hou, Li and Pan proposed the following conjecture.

\begin{conjecture}(See \cite{Hou})\label{ConjecHou-1}
	{\em Let $\Gamma=(G,\sigma)$ be a connected signed graph of order $n$. Then
		$$
		\sum_{i=1}^{k}\mu_{i}(\Gamma) >\sum_{i=1}^{k}d_{i}(\Gamma)~~(1\le k\le n-1),
		$$
		where $\mu_{1}(\Gamma)\ge\mu_{2}(\Gamma)\ge\cdots \ge \mu_{n}(\Gamma)$ are Laplacian eigenvalues of $\Gamma$, and $d_{1}(\Gamma)\ge d_{2}(\Gamma)\ge \dots \ge d_{n}(\Gamma)$ are  vertex degrees of $\Gamma$.
	}
\end{conjecture}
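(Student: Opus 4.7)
The plan is to invoke the Ky Fan trace maximum principle on the principal submatrix of $L(\Gamma)$ indexed by the $k$ vertices of largest degree, and then leverage connectivity to make the resulting inequality strict.

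Let $S\subseteq V(\Gamma)$ be a set of $k$ vertices of largest degree, so that $\sum_{v\in S}d(v)=\sum_{i=1}^{k}d_{i}(\Gamma)$, and let $P_S$ denote the orthogonal projection onto the coordinate subspace $\mathrm{span}\{e_v:v\in S\}\subseteq\mathbb{R}^{n}$. Since $P_S$ has rank $k$ and the diagonal entry of $L(\Gamma)$ at $v$ equals $d(v)$, Ky Fan's principle (equivalently, Cauchy interlacing applied to the principal submatrix $L(\Gamma)_{S,S}$) yields
$$\sum_{i=1}^{k}\mu_{i}(\Gamma)\ \ge\ \Tr\bigl(P_S L(\Gamma)\bigr)\ =\ \Tr\bigl(L(\Gamma)_{S,S}\bigr)\ =\ \sum_{v\in S}d(v)\ =\ \sum_{i=1}^{k}d_{i}(\Gamma).$$

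For strictness, suppose equality holds. I would then use the equality case of Ky Fan to force $P_S$ to commute with $L(\Gamma)$: writing $p_i=\|P_S w_i\|^{2}\in[0,1]$ for an orthonormal eigenbasis $\{w_i\}$ of $L(\Gamma)$, the identities $\sum_i p_i=\Tr(P_S)=k$ and $\sum_i\mu_i(\Gamma)p_i=\sum_{i=1}^{k}\mu_{i}(\Gamma)$ force $p_i=1$ for every $i$ with $\mu_i(\Gamma)>\mu_k(\Gamma)$ and $p_i=0$ for every $i$ with $\mu_i(\Gamma)<\mu_k(\Gamma)$. Hence $\mathrm{range}(P_S)$ splits as the span of the strict top eigenvectors plus a subspace of the $\mu_k(\Gamma)$-eigenspace, both $L(\Gamma)$-invariant, so $P_S L(\Gamma)=L(\Gamma)P_S$. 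In block form with $S$ listed first, this is equivalent to $L(\Gamma)_{S,V\setminus S}=0$; but $(L(\Gamma))_{uv}=-\sigma(uv)\ne 0$ for every edge $uv$ of $\Gamma$, so no edge of $\Gamma$ joins $S$ to $V\setminus S$. For $1\le k\le n-1$ both sets are nonempty, contradicting the connectivity of $\Gamma$, so the inequality must be strict.

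The main subtlety is the Ky Fan equality analysis when $\mu_k(\Gamma)=\mu_{k+1}(\Gamma)$, where the maximizing projection is not unique; the decomposition of $\mathrm{range}(P_S)$ above, combined with the $L(\Gamma)$-invariance of the $\mu_k(\Gamma)$-eigenspace (where $L(\Gamma)$ acts as a scalar), still guarantees $P_S L(\Gamma)=L(\Gamma)P_S$. Apart from that, the proof depends only on the shape of $L(\Gamma)$ (degrees on the diagonal, nonzero off-diagonal entries exactly on edges), and the sign function plays no role in the final obstruction, which is why the argument carries over from the unsigned to the signed setting essentially unchanged.
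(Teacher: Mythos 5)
Your proof is correct, and it takes a genuinely different route from the paper's. The paper partitions $L(\Gamma)$ into blocks $B$, $C$, $C^{T}$, $D$ with $B$ the $k\times k$ principal block on the top-degree vertices and then case-splits: if $B$ is singular, then $\det(L(\Gamma))=0$, so $\Gamma$ is balanced by Lemma~\ref{Hou} and the claim is imported from Grone's unsigned Theorem~\ref{Grone}; if $B$ is positive definite, the paper factors $L(\Gamma)=FF^{T}$, passes to $F^{T}F$ (same spectrum), whose leading block is $B+B^{-1/2}CC^{T}B^{-1/2}$, and derives strictness from $\trace\left(B^{-1}CC^{T}\right)>0$, which again rests on $C\neq 0$. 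You replace all of this with one uniform step: the Ky Fan bound (essentially the ``in particular'' clause of Lemma~\ref{Cvt}) followed by an analysis of its equality case. That analysis is sound: the linear-programming argument correctly forces $p_i=1$ above and $p_i=0$ below the level $\mu_k(\Gamma)$, the resulting decomposition of $\mathrm{range}(P_S)$ into $L(\Gamma)$-invariant pieces handles the degenerate case $\mu_k(\Gamma)=\mu_{k+1}(\Gamma)$, and commutation of $P_S$ with $L(\Gamma)$ is indeed equivalent to $C=0$, which kills connectivity since $1\le k\le n-1$ keeps both sides of the cut nonempty. What your route buys is the elimination of the case split and of both external inputs (Grone's theorem and the balance--determinant lemma), and it makes transparent that strictness comes solely from an edge crossing the cut $(S,V\setminus S)$, with the signs playing no role. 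What the paper's route buys is a quantitative handle, namely the explicit positive quantity $\trace\left(B^{-1}CC^{T}\right)=\sum_{i}m_i/a_i$ by which the inequality is strict, and methodological continuity with the incidence-matrix argument used for Theorem~\ref{LP1}.
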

In \cite{Hou}, Hou, Li and Pan proved that  $\mu_{1}(\Gamma)\ge 1+d_{1}(\Gamma)$, and so Conjecture \ref{ConjecHou-1} is true for $k=1$. In 2015, Liu and Shen\cite{Liu} proved that Conjecture \ref{ConjecHou-1} is also true for $k=2,n-1$.

In this paper, we thirdly confirm Conjecture \ref{ConjecHou-1}.

Let $\Gamma=(G,\sigma)$ be a signed graph. For $U\subseteq V(\Gamma)$, let $N_{U}(v)$ denote the set of neighbors of $v$ in $U$, and $d_{U}(v)=|N_U(v)|$. In particular, if $U=V(\Gamma)$, we write $N_{\Gamma}(v)$ and $d_{\Gamma}(v)$ for simplicity. Let $\Gamma[U]$ denote the signed subgraph of $\Gamma$ induced by $U$, with edge signs inherited from $\Gamma$. Sometimes, we say that $U$ induces $\Gamma[U]$.

Finally, we give another lower bound on the sum of the $k$ largest Laplacian eigenvalues of a signed graph, as shown in Theorem \ref{LP1}.

\begin{theorem}\label{LP1}
	Let $\Gamma=(G,\sigma)$ be a connected signed graph of order $n>2$ and size $m$. Suppose that $R=\left \{u_{1},u_{2},\dots,u_{k}\right \}$ is a subset of the vertex set of $\Gamma$. If $E(\Gamma[R])$ consists of $r$ pairwise disjoint edges, then
	$$
	\sum_{i=1}^{k}\mu_{i}(\Gamma)\ge \sum_{i=1}^{k}d_{\Gamma}(u_{i})+k-r.
	$$
\end{theorem}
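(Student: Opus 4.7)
The plan is to use the Ky Fan variational characterization
\[
\sum_{i=1}^{k}\mu_{i}(\Gamma)\;=\;\max_{X\in\mathbb{R}^{n\times k},\,X^{\top}X=I_{k}}\mathrm{tr}\bigl(X^{\top}L(\Gamma)X\bigr),
\]
and to construct an explicit orthonormal family $\{x_{1},\dots,x_{k}\}\subset\mathbb{R}^{n}$ whose Rayleigh quotients sum to at least $\sum_{i=1}^{k}d_{\Gamma}(u_{i})+(k-r)$.

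First I build a baseline family supported entirely on $R$. For each of the $r$ matching edges $\{u,v\}$ of sign $\sigma$ in $\Gamma[R]$, I introduce the two orthonormal vectors $\tfrac{1}{\sqrt{2}}(e_{u}-\sigma e_{v})$ and $\tfrac{1}{\sqrt{2}}(e_{u}+\sigma e_{v})$; using $L(\Gamma)_{uv}=-\sigma$ one checks directly that their Rayleigh quotients are $\tfrac{1}{2}(d_{\Gamma}(u)+d_{\Gamma}(v))+1$ and $\tfrac{1}{2}(d_{\Gamma}(u)+d_{\Gamma}(v))-1$, respectively. For each of the $k-2r$ isolated vertices $u$ of $\Gamma[R]$, I simply use $e_{u}$, with Rayleigh quotient $d_{\Gamma}(u)$. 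These $k$ orthonormal vectors form a basis of $\mathrm{span}\{e_{u}:u\in R\}$ whose quadratic forms sum to exactly $\sum_{u\in R}d_{\Gamma}(u)$, matching the trace of the $k\times k$ principal submatrix of $L(\Gamma)$ on $R$ but falling short of the target by $k-r$.

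Next I exploit the connectivity of $\Gamma$. Since $n>2$ and $\Gamma$ is connected, $|E(\Gamma)|\ge n-1>n/2\ge |E(\Gamma[R])|$, so $R\subsetneq V(\Gamma)$, and for each of the $k-r$ components $C$ of $\Gamma[R]$ there is an edge from $C$ to $V(\Gamma)\setminus R$. I then perturb each of the $k-r$ ``low-Rayleigh'' baseline vectors --- namely the symmetric combinations $\tfrac{1}{\sqrt{2}}(e_{u}+\sigma e_{v})$ for matching edges and the $e_{u}$ for isolated vertices --- by mixing in a unit vector supported on $V(\Gamma)\setminus R$ that is adjacent to the corresponding component. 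A $2\times 2$ eigenvalue analysis of the associated principal block shows that, when the rotation is chosen along the larger eigenvector, the total Rayleigh-quotient gain over all $k-r$ components is at least $k-r$, producing the required surplus.

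The main obstacle will be preserving the mutual orthogonality of the perturbed vectors when several components share a common outside neighbor, as already happens for $\Gamma=P_{3}$ with $R=\{u_{1},u_{3}\}$. I plan to handle this by carrying out all perturbations jointly inside the auxiliary subspace $W=\mathrm{span}\{e_{u}:u\in R\}+\mathrm{span}\{e_{w}:w\in N(R)\setminus R\}$, applying Cauchy interlacing between $L(\Gamma)$ and its principal submatrix on $W$, and orthonormalizing the enlarged boundary directions by a Gram--Schmidt step. The matching hypothesis is crucial here: it forces every component of $\Gamma[R]$ to have at most two vertices, so the signed Laplacian block on each component has spectrum in $\{0,2\}$, and the symmetric ``zero mode'' inside each $2$-dimensional block provides the canonical direction along which the outside perturbations can be absorbed without interfering with the antisymmetric ``high-Rayleigh'' companions. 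This lets the per-component $+1$ bonuses be aggregated cleanly after the orthogonalization, yielding the bound $\sum_{i=1}^{k}d_{\Gamma}(u_{i})+(k-r)$.
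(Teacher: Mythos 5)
Your baseline family is correct (each matching edge contributes $\tfrac12(d_\Gamma(u)+d_\Gamma(v))\pm 1$ and each isolated vertex of $\Gamma[R]$ contributes $d_\Gamma(u)$, summing to $\sum_{u\in R}d_\Gamma(u)$), and the Ky Fan framework is the right one. But the step that is supposed to produce the surplus $k-r$ has a genuine gap, and it is the heart of the proof. You claim that mixing each ``low'' baseline vector with a single unit vector $e_w$, for one outside neighbour $w$ of the component, and taking the top eigenvector of the associated $2\times 2$ principal block yields a total gain of at least $k-r$. This is false: for an isolated vertex $u$ of $\Gamma[R]$ with outside neighbour $w$, the larger eigenvalue of the block $\begin{pmatrix} d_\Gamma(u) & \pm 1\\ \pm 1 & d_\Gamma(w)\end{pmatrix}$ equals $\tfrac{d_\Gamma(u)+d_\Gamma(w)}{2}+\sqrt{\bigl(\tfrac{d_\Gamma(u)-d_\Gamma(w)}{2}\bigr)^2+1}$, which is at least $d_\Gamma(u)+1$ only when $d_\Gamma(w)\ge d_\Gamma(u)$. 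Already for the star $K_{1,3}$ with $R$ equal to the centre ($k=1$, $r=0$, target $\mu_1\ge 4$, attained with equality) every vector supported on the centre and one leaf has Rayleigh quotient at most $2+\sqrt2<4$; to reach $d_\Gamma(u)+1$ one must spread mass over \emph{all} neighbours of $u$, via the vector proportional to $d_\Gamma(u)e_u-\sum_{v\sim u}\sigma(uv)e_v$. Those vectors for different $u\in R$ fail to be orthogonal whenever two of them are adjacent or share a neighbour, and your proposed repair (Gram--Schmidt inside $W$ plus interlacing against $L[W]$) is only a plan: Gram--Schmidt alters the Rayleigh quotients, and interlacing merely transfers the same problem to $W$. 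So the surplus $k-r$ is not established.

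The paper avoids all of this by passing to the edge space, following Grone. With the signed incidence matrix $P$ one has $L(\Gamma)=PP^{T}$, and $K=P^{T}P$ has the same nonzero spectrum; orienting the edges so that each $u_i$ is the head of all (respectively, all but its matching) edges incident with it, the indicator vectors of the edge sets headed at distinct vertices of $R$ are \emph{automatically} orthogonal, and the quadratic form of $K$ at the normalized vector for $u$ picks up $d_\Gamma(u)$ from the coordinate of $u$ plus $1/d_\Gamma(u)$ from each of its $d_\Gamma(u)$ neighbours --- exactly the $+1$ per unmatched vertex (and $+1$ per matched pair) that your construction is missing. To complete your vertex-space argument you would essentially have to rebuild this mechanism by hand; I recommend switching to the incidence-matrix route.
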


We will prove Theorem \ref{AD1} in Section \ref{Sec-Conj-Hou-33}, Theorem \ref{AD2} in Section \ref{Sec-Conj-Hou-44}, Conjecture \ref{ConjecHou-1} in Section \ref{Sec-Conj-Hou-1} and Theorem \ref{LP1} in Section \ref{Sec-Conj-Hou-22}, respectively.


\section{Proof~of~Theorem~\ref{AD1}}\label{Sec-Conj-Hou-33}



\begin{lemma}\emph{(See \cite{Yao})}
	\label{Yao}
	Let $\Gamma_{1}=(G,\sigma_{1})$ and $\Gamma_{2}=(G,\sigma_{2})$ be two signed graphs on the same underlying graph. The followings are equivalent:
	
	$\mathrm{(1)}$~$\Gamma_{1}$ and $\Gamma_{2}$ are switching equivalent.
	
	$\mathrm{(2)}$~$A(\Gamma_{1})$ and $A(\Gamma_{2})$ are signature similar.
	
	$\mathrm{(3)}$~$L(\Gamma_{1})$ and $L(\Gamma_{2})$ are signature similar.
\end{lemma}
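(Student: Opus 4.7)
The plan is to prove the three-way equivalence by establishing $(1)\Leftrightarrow(2)$ directly through a bijection between sign functions on $V(\Gamma)$ and diagonal $\pm 1$ matrices, and then deducing $(2)\Leftrightarrow(3)$ by exploiting the fact that $\Gamma_1$ and $\Gamma_2$ share the same underlying graph $G$ and hence the same degree matrix $D$.

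For $(1)\Rightarrow(2)$, I would take a switching function $\theta$ with $\Gamma_2=\Gamma_1^\theta$ and encode it as $S=\diag(\theta(v_1),\ldots,\theta(v_n))$. Since $\theta(v_i)\in\{+1,-1\}$, the matrix $S$ is a signature matrix with $S^{-1}=S$. The defining switching identity $\sigma^\theta(v_iv_j)=\theta(v_i)\sigma(v_iv_j)\theta(v_j)$, multiplied entrywise by the underlying $0/1$ adjacency entry $a_{ij}$, gives $A(\Gamma_2)_{ij}=\theta(v_i)A(\Gamma_1)_{ij}\theta(v_j)=(SA(\Gamma_1)S^{-1})_{ij}$ for all $i,j$ (including non-edges and the diagonal, where both sides vanish), yielding signature similarity.

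For $(2)\Rightarrow(1)$, I would reverse this computation: given a signature matrix $S=\diag(s_1,\ldots,s_n)$ with $A(\Gamma_2)=SA(\Gamma_1)S^{-1}$, I set $\theta(v_i)=s_i$ and read the entrywise identity on each edge $v_iv_j$ of $G$ (which is the same edge set for both graphs, since signature similarity of adjacency matrices preserves the zero pattern up to sign and hence preserves the underlying graph). This recovers $\sigma_2(v_iv_j)=\theta(v_i)\sigma_1(v_iv_j)\theta(v_j)$, so $\Gamma_2=\Gamma_1^\theta$. The small point worth verifying here is precisely that $A(\Gamma_1)$ and $A(\Gamma_2)$ have identical zero-patterns so that $\theta$ is well-defined as a switching function of a single underlying graph.

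For $(2)\Leftrightarrow(3)$, I would observe that $D(\Gamma_1)=D(\Gamma_2)=:D$ since the degree matrix depends only on $G$, and that any diagonal matrix commutes with the diagonal signature matrix $S$, so $SDS^{-1}=D$. Consequently
\begin{equation*}
L(\Gamma_2)=D-A(\Gamma_2)=SDS^{-1}-SA(\Gamma_1)S^{-1}=S(D-A(\Gamma_1))S^{-1}=SL(\Gamma_1)S^{-1}
\end{equation*}
holds if and only if $A(\Gamma_2)=SA(\Gamma_1)S^{-1}$. Combined with $(1)\Leftrightarrow(2)$ this closes the cycle. The statement is essentially a bookkeeping exercise, so there is no substantial obstacle; the only subtle point is ensuring in the implication $(2)\Rightarrow(1)$ that the common underlying graph hypothesis is genuinely used (rather than merely deduced), which is why the equivalence is stated for signed graphs sharing the same $G$.
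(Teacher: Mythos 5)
Your proof is correct and complete; the paper itself gives no proof of this lemma, citing it from the reference of Hou, Tang and Wang, and your argument (encoding the switching function as a diagonal signature matrix $S$ with $S^{-1}=S$, checking the entrywise identity on edges and non-edges, and using that $S$ commutes with the common degree matrix $D$ to pass between the adjacency and Laplacian statements) is exactly the standard one found there. No gaps.
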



\begin{lemma}\emph{(See \cite{Sun})}\label{Sun}
	Let $\Gamma$ be a signed graph with $n$ vertices. Then there exists a signed graph $\Gamma^{\prime}$ switching equivalent to $\Gamma$ such that $\lambda_{1}(\Gamma^{\prime})$ has a non-negative eigenvector.
\end{lemma}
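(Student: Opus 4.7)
The plan is to invoke the Ky Fan maximum principle
$$\sum_{i=1}^{k}\mu_{i}(\Gamma)=\max_{X\in\mathbb{R}^{n\times k},\,X^{T}X=I_{k}}\mathrm{tr}(X^{T}L(\Gamma)X),$$
so that it suffices to exhibit $k$ orthonormal vectors whose Rayleigh quotients with respect to $L(\Gamma)$ sum to at least $\sum_{i=1}^{k}d_{\Gamma}(u_{i})+(k-r)$. Because $E(\Gamma[R])$ consists of $r$ pairwise-disjoint edges, Lemma \ref{Yao} lets me switch $\Gamma$ to a sign-equivalent graph in which every edge of $\Gamma[R]$ is positive: flip one endpoint of each negative internal edge, noting that vertex-disjointness of the edges prevents any conflict. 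Since both Laplacian eigenvalues and vertex degrees are switching-invariant, this normalization is without loss of generality.

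Next, I would decompose $R$ into its $k-r$ connected components in $\Gamma[R]$, namely $k-2r$ singletons and $r$ (now-positive) edges. Connectedness of $\Gamma$ together with $n>2$ forces $R\subsetneq V(\Gamma)$, and further guarantees that every component of $\Gamma[R]$ has at least one edge to $V(\Gamma)\setminus R$. For each component $C_{s}$ select one such external edge, giving an external neighbor $y_{s}\notin R$, and build the test vectors componentwise. For a positive edge component $\{u,v\}$, the vector $(e_{u}-e_{v})/\sqrt{2}$ has Rayleigh $\tfrac{d_{\Gamma}(u)+d_{\Gamma}(v)}{2}+1$, one more than the local trace quota, while a second companion vector chosen in the span of $(e_{u}+e_{v})/\sqrt{2}$ and $e_{y_{s}}$ is designed to cancel the $-1$ drop along the symmetric direction. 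For a singleton component $\{u\}$, take the top eigenvector of the $2\times 2$ principal block of $L(\Gamma)$ on $\{u,y_{s}\}$, aiming at Rayleigh at least $d_{\Gamma}(u)+1$. Summing componentwise then yields the required lower bound $\sum_{u\in R}d_{\Gamma}(u)+(k-r)$.

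The main obstacle will be twofold. First, the naive $2\times 2$ tilt produces Rayleigh at least $d_{\Gamma}(u)+1$ only when $d_{\Gamma}(y_{s})\ge d_{\Gamma}(u)$ (and an analogous degree inequality is needed on the edge-component companions), so in general one must enlarge each per-component auxiliary subspace to include several external neighbors of $C_{s}$ and invoke a trace or Schur-complement identity to still extract a $+1$ gain on average. Second, the external neighbors $y_{s}$ cannot always be chosen pairwise distinct (a system of distinct representatives may fail to exist, e.g.\ when many components of $\Gamma[R]$ attach to a single vertex of $V(\Gamma)\setminus R$), and orthonormality across components must then be restored by a Gram--Schmidt coupling in the shared directions. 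Verifying that this orthogonalisation does not erode the cumulative $(k-r)$ gain, using only that $L(\Gamma)\succeq 0$, that $\Gamma[R]$ is a matching, and that $\Gamma$ is connected, is the technical heart of the argument.
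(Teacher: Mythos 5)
Your proposal does not address the statement at hand. The statement to be proved is Lemma \ref{Sun}: for any signed graph $\Gamma$ there is a switching-equivalent $\Gamma'$ such that $\lambda_{1}(\Gamma')$, the largest \emph{adjacency} eigenvalue, admits a non-negative eigenvector. What you have written is instead a proof sketch for Theorem \ref{LP1} (the lower bound $\sum_{i=1}^{k}\mu_{i}(\Gamma)\ge\sum_{i=1}^{k}d_{\Gamma}(u_{i})+k-r$ for the Laplacian eigenvalues when $\Gamma[R]$ is a matching): the Ky Fan principle, the decomposition of $R$ into components, and the per-component test vectors all belong to that other result and make no contact with the claim about a non-negative eigenvector of $A(\Gamma')$. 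As a proof of Lemma \ref{Sun} this is a complete miss, not a partial gap.

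For the record, the intended argument is short. Let $\mathbf{x}$ be a unit eigenvector of $A(\Gamma)$ for $\lambda_{1}(\Gamma)$, set $S=\{v: x_{v}<0\}$, and let $\theta$ be the switching function with $\theta(v)=-1$ exactly on $S$, with $D=\diag(\theta(v_{1}),\dots,\theta(v_{n}))$. Then $A(\Gamma^{\theta})=DA(\Gamma)D^{-1}$ (Lemma \ref{Yao}), so $\lambda_{1}(\Gamma^{\theta})=\lambda_{1}(\Gamma)$ and
\begin{equation*}
A(\Gamma^{\theta})\,(D\mathbf{x})=DA(\Gamma)\mathbf{x}=\lambda_{1}(\Gamma)\,(D\mathbf{x}),
\end{equation*}
while $(D\mathbf{x})_{v}=\theta(v)x_{v}=|x_{v}|\ge 0$ for every $v$. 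Thus $\Gamma'=\Gamma^{\theta}$ works. Your matching/Ky Fan machinery should be redirected to Theorem \ref{LP1}, where (incidentally) the paper uses the incidence-matrix factorization $L(\Gamma)=PP^{T}$ and test vectors indexed by edges rather than the vertex-space tilts you describe.
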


Let $\Gamma=(G,\sigma)$ be a  signed graph with underlying graph $G=(V(G),E(G))$. Let $\emptyset\not=S\subseteq  E(G)$. Denote by $\Gamma-S$ the signed graph obtained from $\Gamma$ by deleting all signed edges of $\Gamma$ whose underlying edges are in $S$.

\begin{lemma}\emph{(See \cite{Lan})}\label{Lan}
	Let $\Gamma$ be an unbalanced signed graph with underlying graph $G=(V(G),E(G))$. Then there exists an edge $e=v_{k}v_{r}\in E(G)$ such that $\lambda_{1}(\Gamma)\le \lambda_{1}(\Gamma-e)$. Furthermore, if $\Gamma-e$ is balanced, then $\lambda_{1}(\Gamma)<\lambda_{1}(\Gamma-e)$.
\end{lemma}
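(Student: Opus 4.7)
The strategy is a Rayleigh-quotient argument after a switching reduction, coupled with a Perron--Frobenius analysis for the strict case.

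\emph{Non-strict part.} By Lemmas~\ref{Yao} and \ref{Sun}, I may replace $\Gamma$ with a switching equivalent signed graph $\Gamma'=(G,\sigma')$ admitting a non-negative eigenvector $x\ge 0$ for $\lambda_1(\Gamma')=\lambda_1(\Gamma)$. Since balance is a switching invariant, $\Gamma'$ is still unbalanced and hence contains at least one negative edge $e'=v_kv_r$. Let $e$ denote its underlying edge; the same switching function takes $\Gamma-e$ to $\Gamma'-e'$, so that $\lambda_1(\Gamma-e)=\lambda_1(\Gamma'-e')$. Using $\sigma'(e')=-1$, the Rayleigh quotient of $A(\Gamma'-e')$ at $x$ evaluates to
\[
\frac{x^TA(\Gamma'-e')x}{x^Tx}=\lambda_1(\Gamma')+\frac{2x_kx_r}{\|x\|^2}\ge\lambda_1(\Gamma'),
\]
so $\lambda_1(\Gamma-e)=\lambda_1(\Gamma'-e')\ge\lambda_1(\Gamma)$, which proves the first assertion.

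\emph{Strict part.} Assume $\Gamma-e$ is balanced and, for contradiction, that $\lambda_1(\Gamma-e)=\lambda_1(\Gamma)$. Equality in the Rayleigh bound above forces both $x_kx_r=0$ and $x$ to be a top eigenvector of $A(\Gamma'-e')$; subtracting this eigenequation from $A(\Gamma')x=\lambda_1 x$ yields $-(x_re_k+x_ke_r)=0$, so $x_k=x_r=0$. Observe that $e$ cannot be a bridge of $G$: otherwise every cycle of $\Gamma$ would avoid $e$, lie in the balanced $\Gamma-e$, and be positive, forcing $\Gamma$ to be balanced. Hence the relevant component of $G-e$ is connected. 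Now apply a further switching $\theta$ making every edge of $\Gamma'-e'$ positive; the resulting signed graph has adjacency $A(G-e)-E_{kr}$ (the edge $e'$ must remain negative, else unbalance is lost), and the eigenvector becomes $y=\mathrm{diag}(\theta)x$ with $|y|=x$ and $y_k=y_r=0$. The eigenequation then collapses to $A(G-e)y=\lambda_1 y$, and the triangle inequality applied entrywise—legitimate because $A(G-e)\ge 0$—gives $A(G-e)x\ge\lambda_1 x$. Since $G-e$ is connected and $\lambda_1(G-e)\ge\lambda_1$, Perron--Frobenius promotes this to equality $A(G-e)x=\lambda_1 x$; the $v_k$-row then reads $\sum_{j\sim_{G-e}v_k}x_j=0$, forcing $x$ to vanish on the open neighbourhood of $v_k$ in $G-e$, and iterating along the connectedness of $G-e$ drives $x\equiv 0$, contradicting $x\ne 0$.

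\emph{Main obstacle.} The strict inequality is the hard half: extracting $x_k=x_r=0$ from the Rayleigh equality is routine, but promoting this to a global contradiction requires (i) a second switching to return from the signed adjacency to the positive underlying matrix $A(G-e)$; (ii) the triangle-inequality step that turns the signed eigenequation into the componentwise domination $A(G-e)x\ge\lambda_1 x$; and (iii) a Perron--Frobenius propagation of the zero pattern of $x$. The preliminary bridge argument, guaranteeing that the balanced remnant $G-e$ is connected on the component that hosts the spectral radius, is the essential structural input; the disconnected case is handled by restricting every step to the component containing the negative cycle that witnesses the unbalance of $\Gamma$.
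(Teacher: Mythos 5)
The paper itself gives no proof of this lemma --- it is imported from \cite{Lan} --- so there is no in-paper argument to compare against; I can only assess your proposal on its merits. Your non-strict part is the standard argument (switch via Lemmas~\ref{Yao} and~\ref{Sun} to a non-negative top eigenvector, delete a negative edge, Rayleigh quotient) and is correct. Your strict part is also sound \emph{provided $G$ is connected}: equality forces $x_k=x_r=0$, the bridge observation gives connectivity of $G-e$, and the second switching plus the entrywise triangle inequality and the propagation of zero entries of $x$ along $G-e$ yields the contradiction. One small imprecision: promoting $A(G-e)x\ge\lambda_1 x$ to equality by Perron--Frobenius needs $\lambda_1(G-e)=\lambda_1(\Gamma)$, not merely $\lambda_1(G-e)\ge\lambda_1(\Gamma)$; you do have this, since $\Gamma-e$ balanced gives $\lambda_1(\Gamma-e)=\lambda_1(G-e)$ and you are assuming $\lambda_1(\Gamma-e)=\lambda_1(\Gamma)$ for contradiction (and in the alternative branch the subinvariance theorem gives $\lambda_1(G-e)>\lambda_1(\Gamma)$ outright, so either way you finish).

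The genuine gap is your final sentence on the disconnected case. Restricting to ``the component containing the negative cycle that witnesses the unbalance'' does not prove the statement for the whole graph: the non-negative eigenvector $x$ for $\lambda_1(\Gamma')$ may be supported entirely on a \emph{different}, balanced component, in which case $x_k=x_r=0$ holds vacuously and none of your machinery applies to $\lambda_1(\Gamma)$ itself. In fact the strict assertion fails for disconnected graphs: take $\Gamma=(K_5,+)\cup C_3^-$. Then $\lambda_1(\Gamma)=4$; deleting any edge of the unbalanced triangle leaves a balanced graph with largest eigenvalue still $4$, while deleting an edge of $K_5$ strictly decreases $\lambda_1$, so no edge satisfies both assertions. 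Hence the lemma needs a connectivity hypothesis (presumably present in \cite{Lan}, and implicitly assumed wherever the paper invokes it), and your proof should state that hypothesis rather than claim the disconnected case is handled by restriction to a component.
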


By refining the proof of Lemma \ref{Lan}, we can obtain the following result.

\begin{lemma}\label{LanCor-S}
	Let $\Gamma$ be an unbalanced signed graph with underlying graph $G=(V(G),E(G))$. Then there exists a set $S\subseteq E(G)$ such that $\lambda_{1}(\Gamma)\le \lambda_{1}(\Gamma-S)$. Furthermore, if $\Gamma-S$ is balanced, then $\lambda_{1}(\Gamma)<\lambda_{1}(\Gamma-S)$.
\end{lemma}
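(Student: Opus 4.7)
The strategy is to iterate Lemma \ref{Lan}. Starting from the unbalanced $\Gamma$, Lemma \ref{Lan} supplies an edge $e_1\in E(G)$ with $\lambda_1(\Gamma)\le \lambda_1(\Gamma-e_1)$; if $\Gamma-e_1$ is balanced we halt. Otherwise $\Gamma-e_1$ is itself unbalanced and Lemma \ref{Lan} applies again to produce an edge $e_2\in E(G-e_1)$ with $\lambda_1(\Gamma-e_1)\le \lambda_1(\Gamma-\{e_1,e_2\})$. Proceeding inductively, at stage $i+1$ if $\Gamma-\{e_1,\dots,e_i\}$ is unbalanced we apply Lemma \ref{Lan} to it and obtain $e_{i+1}$; otherwise we halt.

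Because $|E(G)|$ is finite and the edgeless signed graph on $V(G)$ is (trivially) balanced, this process must terminate. Let $t\ge 1$ be the first index for which $\Gamma-\{e_1,\dots,e_t\}$ is balanced, while $\Gamma-\{e_1,\dots,e_{t-1}\}$ is still unbalanced (reading the empty deletion as $\Gamma$ itself when $t=1$). Setting $S=\{e_1,\dots,e_t\}$ and chaining the successive inequalities yields
\[
\lambda_1(\Gamma)\le \lambda_1(\Gamma-e_1)\le \cdots \le \lambda_1(\Gamma-\{e_1,\dots,e_{t-1}\}) < \lambda_1(\Gamma-S),
\]
where the final strict inequality comes from the ``furthermore'' clause of Lemma \ref{Lan} applied to the unbalanced signed graph $\Gamma-\{e_1,\dots,e_{t-1}\}$, whose deletion of $e_t$ produces the balanced $\Gamma-S$. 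This simultaneously establishes the non-strict assertion of the lemma and the conditional strict assertion when $\Gamma-S$ is balanced.

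The argument is essentially a bookkeeping iteration of Lemma \ref{Lan}, so I do not expect any substantive obstacle. The only point requiring any care is ensuring that the process halts exactly at the first unbalanced-to-balanced transition, so that the strict part of Lemma \ref{Lan} fires at that last step rather than being wasted on an intermediate step. Finiteness of $E(G)$ and the triviality that removing all edges produces a balanced signed graph make termination automatic, and the strict inequality then propagates through the chain, so no further refinement beyond a careful restatement of the proof of Lemma \ref{Lan} is required.
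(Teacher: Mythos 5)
Your iteration of Lemma \ref{Lan} is logically sound and does prove the lemma exactly as it is stated: the process terminates because the edgeless graph is balanced, and the strict inequality fires at the last (unbalanced-to-balanced) step and propagates through the chain. However, this is a genuinely different route from what the paper intends. The paper's one-line justification is ``by refining the proof of Lemma \ref{Lan}'', which means reworking the internal Rayleigh-quotient argument: take $\Gamma'\sim\Gamma$ with a non-negative eigenvector $\mathbf{x}$ for $\lambda_1$, and delete \emph{all} negative edges of $\Gamma'$ in one shot, so that $\lambda_1(\Gamma)=\mathbf{x}^TA(\Gamma')\mathbf{x}\le\mathbf{x}^TA(\Gamma'-S)\mathbf{x}\le\lambda_1(\Gamma'-S)$ for the \emph{specific} set $S$ of underlying edges of $E^-(\Gamma')$. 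The difference matters downstream: your argument only certifies the existence of \emph{some} $S$ (whose identity you cannot control, since Lemma \ref{Lan} hands you an unspecified edge at each step), whereas the proofs of Theorems \ref{AD1} and \ref{AD2} invoke Lemma \ref{LanCor-S} for the particular choice $S=E^-(\Gamma')$. So while your proof is valid for the literal statement, it would not by itself support the way the lemma is later applied; the paper's refinement, by contrast, proves the stronger pointed version that those applications actually need.
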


\medskip
\begin{Tproof}\textbf{~of~Theorem~\ref{AD1}.}
	Define
	$$
	S^{\pm}_{n}=\left \{(x_{1},x_{2},\dots,x_{n})^T\in \mathbb{R}^{n} : \sum |x_{i}|=1 \right\}.
	$$
	By \cite[Theorem 5]{Wang-Yan-Qian}, we have
	\begin{equation}\label{equation-clique}
		\max\left \{\mathbf{x}^{T}A(\Gamma)\mathbf{x}:  \mathbf{x}\in S^{\pm}_{n} \right \}=1-\frac{1}{\omega_{b}(\Gamma)}.
	\end{equation}
	
	Set $V(\Gamma)=\{1,2, \ldots,n\}$. Let $W_{\Gamma}$ be an $n\times n$ matrix whose $(i,j)$-entry is $\sigma(ij)\frac{c_{b}(e)}{c_{b}(e)-1}$ if $e=ij\in E(\Gamma)$, and 0 otherwise. For $ \mathbf{x}=(x_{1},x_{2},\dots,x_{n})^T\in S^{\pm}_{n}$, define
	\begin{align}\label{SnXBiaoDaF1}
		F_{\Gamma}(\mathbf{x})& :=\mathbf{x}^{T}W_{\Gamma}\mathbf{x} =2\sum_{e=ij\in E(\Gamma)}\sigma(ij)\frac{c_{b}(e)}{c_{b}(e)-1}x_{i}x_{j}.
	\end{align}

	Let $\mathrm{supp}(\mathbf{x})$ denote the \emph{support} of a vector $\mathbf{x}$, which is the set consisting of all indices corresponding to nonzero entries in $\mathbf{x}$.
	
	\begin{claim}\label{Clique1}
		Let $\mathbf{x}\in S^{\pm}_{n}$ be a vector such that 	$F_{\Gamma}(\mathbf{x})  = \max \left \{F_{\Gamma}(\mathbf{z}) : \mathbf{z}\in S^{\pm}_{n} \right \}$ with minimal supporting set. Then $\mathrm{supp}(\mathbf{x})$ induces a balanced clique in $\Gamma$.
	\end{claim}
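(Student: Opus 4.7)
The plan is a Motzkin--Straus style argument, adapted to signed graphs by a preliminary switching. The twist compared with the classical (unsigned) version is that the second-order term of a transport-of-mass perturbation, usually discarded, now has its sign governed by $\sigma(e)$; this is what forces every edge inside $\mathrm{supp}(\mathbf{x})$ to be positive after switching.

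First, I would switch to align signs. Set $T := \mathrm{supp}(\mathbf{x})$ and define $\theta(v_i) := \mathrm{sign}(x_i)$ for $i \in T$, and $\theta(v_i) := +1$ otherwise. Because switching preserves balanced cliques, the weights $c_b(e)$ in $\Gamma$ and $\Gamma^\theta$ agree edge by edge. A direct computation using $\sigma^\theta(ij) = \theta(v_i)\sigma(ij)\theta(v_j)$ gives $F_{\Gamma^\theta}(\mathbf{y}) = F_\Gamma(\mathbf{x})$ for the vector $\mathbf{y}$ with $y_i := \theta(v_i)\,x_i$, and clearly $\mathbf{y} \in S^\pm_n$, $\mathbf{y} \ge 0$, and $\mathrm{supp}(\mathbf{y}) = T$. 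Since $\mathbf{z} \mapsto (\theta(v_i)z_i)_i$ is an involution of $S^\pm_n$ preserving $F$ and support size, $\mathbf{y}$ is still a minimum-support maximizer, now of $F_{\Gamma^\theta}$. Replacing $(\Gamma,\mathbf{x})$ by $(\Gamma^\theta,\mathbf{y})$, I may therefore assume $\mathbf{x} \ge 0$ from here on.

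Next, I would extract the first-order conditions. Since $x_i > 0$ for every $i \in T$, the Lagrange-multiplier condition at an interior point of the non-negative orthant yields $(W_\Gamma \mathbf{x})_i = \lambda$ for all $i \in T$, where $\lambda = F_\Gamma(\mathbf{x})$. For any pair $i,j \in T$ I now consider the transport-of-mass perturbation $\mathbf{x}(t) := \mathbf{x} + t(e_i - e_j)$, which lies in $S^\pm_n$ for $0 \le t \le x_j$. Expanding the quadratic form and using both the Lagrange condition and $(W_\Gamma)_{ii} = (W_\Gamma)_{jj} = 0$ yields
$$
F_\Gamma(\mathbf{x}(t)) - F_\Gamma(\mathbf{x}) \;=\; -2 t^2\, (W_\Gamma)_{ij},
$$
with the convention $(W_\Gamma)_{ij} = 0$ when $v_i v_j \notin E(\Gamma)$.

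Both halves of the claim then fall out of this single identity. If some pair $i,j \in T$ is non-adjacent in the underlying graph, then the right-hand side vanishes identically, so $\mathbf{x}(x_j)$ is another maximizer with strictly smaller support, contradicting minimality; hence $T$ is a clique. If some edge $v_i v_j \in E(\Gamma[T])$ is negative, then $(W_\Gamma)_{ij} = -c_b(v_iv_j)/(c_b(v_iv_j)-1) < 0$, so $F_\Gamma(\mathbf{x}(t)) > F_\Gamma(\mathbf{x})$ for every $t \in (0,x_j]$, contradicting maximality of $\mathbf{x}$. Therefore every edge of $\Gamma[T]$ is positive, and together with the preliminary switching this is exactly the statement that $T$ induces a balanced clique in the original $\Gamma$. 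The only non-routine ingredient, and the main obstacle relative to the unsigned Motzkin--Straus argument, is the negative-edge case; it is handled precisely by the second-order $t^2$-term, whose sign is dictated by $\sigma(v_iv_j)$.
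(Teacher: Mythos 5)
Your proof is correct and follows essentially the same route as the paper: switch by the sign pattern of $\mathbf{x}$ to reduce to a non-negative minimum-support maximizer, then rule out non-edges and negative edges inside the support via the transport-of-mass perturbation $\mathbf{x}+t(\mathbf{e}_i-\mathbf{e}_j)$, with the sign of the $t^2$-term handling the negative-edge case. The only cosmetic difference is that you annihilate the first-order term using the Lagrange condition $(W_\Gamma\mathbf{x})_i=\lambda$ on the support, whereas the paper simply picks $i,j$ so that $\mathbf{y}^TW_{\Gamma^S}\mathbf{e}_i\ge\mathbf{y}^TW_{\Gamma^S}\mathbf{e}_j$ and keeps that term nonnegative.
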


	\noindent\emph{Proof of Claim 1.}~Let $\mathbf{x}=(x_{1},x_{2},\dots,x_{n})^T\in S^{\pm}_{n}$ such that $\mathbf{x}$ has a minimal supporting set and $F_{\Gamma}(\mathbf{x})  =  \max\left \{F_{\Gamma}(\mathbf{z}) : \mathbf{z}\in S^{\pm}_{n} \right \}$. Define $S=\left \{i : x_{i}<0 \right \}$ and let $\Gamma^{S}$ be the signed graph obtained from $\Gamma$ by a switching at $S$. Let $\mathbf{y}=(|x_{1}|, |x_{2}|, \dots , |x_{n}|)^T\in S^{\pm}_{n}$. By \eqref{SnXBiaoDaF1}, we have $F_{\Gamma^S}(\mathbf{y})=F_{\Gamma}(\mathbf{x})= \max\left \{F_{\Gamma^S}(\mathbf{z}) : \mathbf{z}\in S^{\pm}_{n} \right \}$. We assert that $\mathbf{y}$ is also a vector such that $F_{\Gamma^S}(\mathbf{y})=\max\left \{F_{\Gamma^S}(\mathbf{z}) : \mathbf{z}\in S^{\pm}_{n} \right \}$ with minimal supporting set. Otherwise, suppose that $\mathbf{z}_1=(a_{1},a_{2},\dots,a_{n})^T\in S^{\pm}_{n}$ such that $F_{\Gamma^S}(\mathbf{z}_1)=F_{\Gamma^S}(\mathbf{y})$ and $|\mathrm{supp}(\mathbf{z}_1)| < |\mathrm{supp}(\mathbf{y})|$. Let $\mathbf{z}_2=(b_{1},b_{2},\dots,b_{n})^T$ where $b_i=-a_i$ for $i\in S$ and  $b_i=a_i$ for $i \notin S$. Then
	$$
	F_{\Gamma}(\mathbf{z}_2)=F_{\Gamma^S}(\mathbf{z}_1)=F_{\Gamma^S}(\mathbf{y})=F_{\Gamma}(\mathbf{x}),
	$$
	but $|\mathrm{supp}(\mathbf{z}_2)|=|\mathrm{supp}(\mathbf{z}_1)|<|\mathrm{supp}(\mathbf{y})|=|\mathrm{supp}(\mathbf{x})|$, a contradiction.

	We assert that $\mathrm{supp}(\mathbf{y})$ induces an all-positive complete subgraph in $\Gamma^S$, that is, $\mathrm{supp}(\mathbf{x})$ induces a balanced clique in $\Gamma$. Otherwise, there are two vertices in $\mathrm{supp}(\mathbf{y})$, say $i$ and $j$, such that $ij\notin E(\Gamma^S)$ or $\sigma^S(ij)=-1$, where $\sigma^S$ is a sign function of $\Gamma^S$. Let $\mathbf{e}_{i}$ be the $i$-th column of the identity matrix of order $n$. Without loss of generality, assume that $\mathbf{y}^{T}W_{\Gamma^S}\mathbf{e}_{i}\ge \mathbf{y}^{T}W_{\Gamma^S}\mathbf{e}_{j}$. Set $\mathbf{y}'=\mathbf{y}+|x_{j}|(\mathbf{e}_{i}-\mathbf{e}_{j})$. Clearly, $\mathbf{y}' \in S^{\pm}_{n}$.
	
	If $ij\notin E(\Gamma)$, then
	$$
	F_{\Gamma^S}(\mathbf{y}')-F_{\Gamma^S}(\mathbf{y}) =2|x_{j}|\mathbf{y}^{T}W_{\Gamma^S}(\mathbf{e}_{i}-\mathbf{e}_{j})\ge 0.
	$$
	
	If $\sigma^S(ij)=-1$, then
	$$
	F_{\Gamma^S}(\mathbf{y}')-F_{\Gamma^S}(\mathbf{y}) =2|x_{j}|\mathbf{y}^{T}W_{\Gamma^S}(\mathbf{e}_{i}-\mathbf{e}_{j})+2x_{j}^{2}\frac{c_{b}(e)}{c_{b}(e)-1}>0.
	$$
	
	Note that $|\mathrm{supp}(\mathbf{y}')| < |\mathrm{supp}(\mathbf{y})|$ and $F_{\Gamma^S}(\mathbf{y}')\ge F_{\Gamma^S}(\mathbf{y})$ in either case, contradictions.
	
	
	This completes the proof of Claim \ref{Clique1}.
	
	\begin{claim}\label{Clique2}
		For each $\mathbf{z}\in S^{\pm}_{n}$, $F_{\Gamma}(\mathbf{z})\le 1$.
	\end{claim}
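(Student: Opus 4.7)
My plan is to combine Claim~\ref{Clique1} with a direct Motzkin--Straus style estimate on a balanced clique. Pick $\mathbf{x}\in S^{\pm}_{n}$ that attains $\max\{F_{\Gamma}(\mathbf{z}):\mathbf{z}\in S^{\pm}_{n}\}$ with smallest possible support; by Claim~\ref{Clique1}, the support $K:=\mathrm{supp}(\mathbf{x})$ induces a balanced clique in $\Gamma$. Let $k=|K|$, so $2\le k\le \omega_{b}(\Gamma)$, and note that the bound for a general $\mathbf{z}$ follows from the bound at this maximiser.

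Since $\Gamma[K]$ is a balanced clique, there is a switching set $S$ (using the proof of Claim~\ref{Clique1}) such that all edges of $\Gamma^{S}[K]$ are positive, and the vector $\mathbf{y}=(|x_{1}|,\ldots,|x_{n}|)^{T}$ satisfies $F_{\Gamma^{S}}(\mathbf{y})=F_{\Gamma}(\mathbf{x})$ and $\sum_{i\in K}y_{i}=1$. Moreover, for every edge $e\in E(\Gamma[K])$ we have $c_{b}(e)\ge k$, since $K$ itself is a balanced clique of order $k$ containing $e$. The function $t\mapsto t/(t-1)$ is decreasing on $(1,\infty)$, so $c_{b}(e)/(c_{b}(e)-1)\le k/(k-1)$ for each such edge.

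Combining these observations gives
\begin{equation*}
F_{\Gamma}(\mathbf{x})
=2\sum_{ij\in E(\Gamma^{S}[K])}\frac{c_{b}(ij)}{c_{b}(ij)-1}\,y_{i}y_{j}
\le \frac{2k}{k-1}\sum_{ij\in E(\Gamma^{S}[K])}y_{i}y_{j}
\le \frac{2k}{k-1}\cdot\frac{1}{2}\Bigl(\Bigl(\sum_{i\in K}y_{i}\Bigr)^{2}-\sum_{i\in K}y_{i}^{2}\Bigr),
\end{equation*}
where the last step uses $\sum_{ij\in\binom{K}{2}}y_{i}y_{j}=\tfrac{1}{2}\bigl((\sum y_{i})^{2}-\sum y_{i}^{2}\bigr)$ and $\Gamma^{S}[K]$ is a clique. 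By Cauchy--Schwarz, $\sum_{i\in K}y_{i}^{2}\ge \tfrac{1}{k}(\sum_{i\in K}y_{i})^{2}=\tfrac{1}{k}$, hence the right-hand side is at most $\tfrac{k}{k-1}\cdot\tfrac{k-1}{k}=1$, as desired.

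The only genuinely delicate point is ensuring that the switching operation preserves both the value $F_{\Gamma}(\mathbf{x})$ and the identification of the maximiser with minimal support; this was already handled inside the proof of Claim~\ref{Clique1}, so I would simply invoke it rather than redo it. Everything else is a routine chain of estimates: the inequality $c_{b}(e)/(c_{b}(e)-1)\le k/(k-1)$ and the Motzkin--Straus inequality on $K$. Therefore the bound $F_{\Gamma}(\mathbf{z})\le 1$ for all $\mathbf{z}\in S^{\pm}_{n}$ follows.
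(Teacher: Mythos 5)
Your proposal is correct and follows essentially the same route as the paper: take a minimal-support maximiser, apply Claim~\ref{Clique1}, switch so that the balanced clique $K$ becomes all-positive, bound $c_{b}(e)/(c_{b}(e)-1)$ by $k/(k-1)$, and finish with a Motzkin--Straus estimate on $K$. The only (harmless) difference is that you derive the bound $\sum_{ij\in E(K)}y_{i}y_{j}\le\frac{k-1}{2k}$ directly from the identity plus Cauchy--Schwarz, whereas the paper invokes its equation \eqref{equation-clique} for the same purpose.
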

	\noindent\emph{Proof of Claim 2.}~Let $\mathbf{w}=(w_{1},w_{2},\dots,w_{n})^T\in S^{\pm}_{n}$ be a vector satisfying $F_{\Gamma}(\mathbf{w})=\max \left \{F_{\Gamma}(\mathbf{y}) : \mathbf{y}\in S^{\pm}_{n} \right \}$ with minimal supporting set. By Claim \ref{Clique1}, $\mathrm{supp}(\mathbf{w})$ forms a balanced clique $(K,\sigma)$ in $\Gamma$. By \eqref{SnXBiaoDaF1},
	$$
	F_{\Gamma}(\mathbf{w})=2\sum_{e=ij\in E((K,\sigma))}\sigma(ij)\frac{c_{b}(e)}{c_{b}(e)-1}w_{i}w_{j}.
	$$
	Let $r$ be the order of $K$, then $c_{b}(e)\ge r$. Observe that $\frac{x}{x-1}$ is a decreasing function in $x$. Then we have $\frac{c_{b}(e)}{c_{b}(e)-1}\le \frac{r}{r-1}$. Since $(K,\sigma)$ is balanced, there exists a sign function $\theta$ such that $\Gamma^{\theta}\sim \Gamma$ and $(K,\sigma^{\theta})=(K,+)$. Assume that $\theta(i)=-1$ for $i\in A^{-}$ and $\theta(i)=+1$ for $i\in A^+$ with $A^+\cup A^-=V(\Gamma)$. Let
	$$
	\mathbf{w}^{\prime}=\left\{
	\begin{array}{l}
		w^{\prime}_{i}=-w_{i}, ~~i\in A^-, \\[0.2cm]
		w^{\prime}_{i}=w_{i}, ~~i\in A^+.
	\end{array}\right.
	$$
	By  \eqref{equation-clique} and \eqref{SnXBiaoDaF1}, we obtain
	\begin{align*}
		F_{\Gamma}(\mathbf{w})=F_{\Gamma^{\theta}}(\mathbf{w^{\prime}})&=2\sum_{e=ij\in E((K,+))}\frac{c_{b}(e)}{c_{b}(e)-1}w^{\prime}_{i}w^{\prime}_{j} \\
		&\le \frac{2r}{r-1} \sum_{e=ij\in E((K,+))}|w^{\prime}_{i}||w^{\prime}_{j}|\\
		&\le \frac{2r}{r-1}\left(\frac{r-1}{2r}\right)=1.
	\end{align*}
	
	This completes the proof of Claim \ref{Clique2}.
	
	Now, by Lemmas \ref{Yao} and \ref{Sun}, assume that $\mathbf{z}=(z_1,z_2,\ldots, z_n)^T$ is a unit non-negative eigenvector of $\lambda_{1}(\Gamma')$. Let $\mathbf{y}=(y_1,y_2,\ldots, y_n)^T$ with $y_{i}=z_{i}^{2}$, $1\le i\le n$. Then
	\begin{align*}
		\lambda_{1}(\Gamma)&=2\sum_{e=ij\in E(\Gamma^{\prime})}\sigma(ij)z_{i}z_{j}\\
		&=2\sum_{e=ij\in E^{+}(\Gamma^{\prime})}\sqrt{y_{i}y_{j}}-2\sum_{e=ij\in E^{-}(\Gamma^{\prime})}\sqrt{y_{i}y_{j}}\\
		&\le 2\sum_{e=ij\in E^{+}(\Gamma^{\prime})}\sqrt{y_{i}y_{j}}\\
		&=2\sum_{e=ij\in E^{+}(\Gamma^{\prime})}\sqrt{\frac{c_{b}(e)-1}{c_{b}(e)}}\sqrt{\frac{c_{b}(e)}{c_{b}(e)-1}y_{i}y_{j}}.
	\end{align*}
	By Cauchy-Schwarz inequality, we have
	$$
	\lambda_{1}^{2}(\Gamma)\le 4\sum_{e\in  E^{+}(\Gamma^{\prime})}\frac{c_{b}(e)-1}{c_{b}(e)} \sum_{e=ij\in  E^{+}(\Gamma^{\prime})}\frac{c_{b}(e)}{c_{b}(e)-1}y_{i}y_{j}.
	$$
	By Claim \ref{Clique2}, we obtain
	$$
	\lambda_{1}^{2}(\Gamma) \le  2\sum_{e\in  E^{+}(\Gamma^{\prime})}\frac{c_{b}(e)-1}{c_{b}(e)}.
	$$
	
	Finally, we characterize the graphs attaining the upper bounds in \eqref{1}. By the equality in \eqref{LB-JCTB-2026}, we only need to prove that the equality in \eqref{1} does not hold for any unbalanced signed graph. To the contrary, we assume that there exists an unbalanced signed graph $\Gamma$ with $\lambda_{1}(\Gamma)$ and $\omega_{b}(\Gamma)=\omega_{b}$ such that
	$$
	\lambda_{1}(\Gamma)=\sqrt{2\sum_{e\in E^{+}(\Gamma^{\prime})}\frac{c_{b}(e)-1}{c_{b}(e)}}.
	$$
	Then $E^{-}(\Gamma')\ne \emptyset$. Let $S$ be the set of underlying edges corresponding to $E^{-}(\Gamma')$. So $\Gamma'-S$ is balanced and then $\lambda_1(\Gamma'-S)$ has a non-negative eigenvector. By \eqref{1}, $\lambda_{1}(\Gamma)=\lambda_{1}(\Gamma')$, and $\lambda_{1}(\Gamma-S)=\lambda_{1}(\Gamma'-S)$, we have
	$$
	\lambda_{1}^{2}(\Gamma)\le \lambda_{1}^{2}(\Gamma-S)\le 2\sum_{e\in E^{+}(\Gamma^{\prime}-S)}\frac{c_{b}(e)-1}{c_{b}(e)}\le 2\sum_{e\in E^{+}(\Gamma^{\prime})}\frac{c_{b}(e)-1}{c_{b}(e)}.
	$$
	This implies that
	$$
	\lambda_{1}(\Gamma)=\lambda_{1}(\Gamma-S).
	$$
	
	Note that $\Gamma-S$ is balanced, since $\Gamma'-S$ is balanced. By Lemma \ref{LanCor-S}, we have $\lambda_{1}(\Gamma)<\lambda_{1}(\Gamma-S)$, a contradiction.
	
	This completes the proof.
	\qed
\end{Tproof}

\begin{remark}\label{rem1-1-1}
	{\em It is known \cite[Lemma~3.1]{Kan} that if $\Gamma$ be a signed graph with $n$ vertices, $m$ edges and frustration index $\epsilon(\Gamma)$, and $\Gamma \sim \Gamma^{\prime}$, then $|E^{+}(\Gamma^{\prime})|\le m-\epsilon(\Gamma)$. By Theorem \ref{AD1} and $c_{b}(e)\le \omega_{b}(\Gamma)$, we get $$
		\lambda_{1}^{2}(\Gamma)\le 2\sum_{e\in  E^{+}(\Gamma^{\prime})}\frac{\omega_{b}(\Gamma)-1}{\omega_{b}(\Gamma)} =2|E^{+}(\Gamma^{\prime})|\frac{\omega_{b}(\Gamma)-1}{\omega_{b}(\Gamma)} \le 2(m-\epsilon(\Gamma))\frac{\omega_{b}(\Gamma)-1}{\omega_{b}(\Gamma)}.
		$$
		Hence the bound of Theorem \ref{AD1} is better than that of \cite[Theorem~3.3]{Kan}.
	}
\end{remark}

\section{Proof of Theorem \ref{AD2}}\label{Sec-Conj-Hou-44}

\begin{lemma}\emph{(See \cite[Theorem 4.2]{Kan})}\label{Kan}
	Let $\Gamma$ be a signed graph with $n$ vertices and $m$ edges with $n \ge 3$, and $\Gamma$ has no balanced triangles. If $\lambda_{1}(\Gamma)\ge \lambda_{2}(\Gamma)\ge\cdots \ge \lambda_{n}(\Gamma)$ are eigenvalues of $A(\Gamma)$ and $\lambda_{1}(\Gamma) \ge |\lambda_{n}(\Gamma)|$, then
	$$
	\lambda_{1}(\Gamma)^{2}+\lambda_{2}(\Gamma)^{2} \le m.
	$$
\end{lemma}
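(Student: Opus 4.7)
The plan is to reduce the inequality to a statement about the first few spectral moments of $A(\Gamma)$. First I would record the two trace identities $\mathrm{tr}(A(\Gamma)^2)=2m$ (immediate from $a_{ij}^2\in\{0,1\}$) and $\mathrm{tr}(A(\Gamma)^3)=6(t^+(\Gamma)-t^-(\Gamma))$, where $t^\pm(\Gamma)$ count the balanced and unbalanced triangles of $\Gamma$, obtained by expanding closed walks of length three and tracking sign products around each triangle. The hypothesis that $\Gamma$ has no balanced triangles forces $t^+(\Gamma)=0$, hence $\mathrm{tr}(A(\Gamma)^3)\le 0$.

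Next, I would convert $\lambda_1\ge|\lambda_n|$ into $\lambda_i\in[-\lambda_1,\lambda_1]$ for every $i$, which makes each factor in $(\lambda_1-\lambda_i)(\lambda_i-\lambda_n)$ non-negative. Summing $\lambda_i^2(\lambda_1-\lambda_i)(\lambda_i-\lambda_n)\ge 0$ over $i$ and expanding gives
\[
\mathrm{tr}(A(\Gamma)^4)\le(\lambda_1+\lambda_n)\,\mathrm{tr}(A(\Gamma)^3)-2m\lambda_1\lambda_n.
\]
Using $\lambda_1+\lambda_n\ge 0$, $\mathrm{tr}(A(\Gamma)^3)\le 0$, and $|\lambda_n|\le\lambda_1$, this simplifies to the fourth-moment bound $\mathrm{tr}(A(\Gamma)^4)\le 2m\lambda_1^2$.

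The remaining task is to extract $\lambda_1^2+\lambda_2^2\le m$ from this fourth-moment bound together with the trace identities. I would pair it with a second non-negative sum
\[
0\le\sum_{i=1}^n\lambda_i^2(\lambda_i-\lambda_1)(\lambda_i-\lambda_2),
\]
whose terms vanish for $i=1,2$ and are non-negative for $i\ge 3$ because both linear factors are $\le 0$ there. Expanding yields a lower bound on $\mathrm{tr}(A(\Gamma)^4)$ in terms of $\lambda_1\lambda_2$, $\mathrm{tr}(A(\Gamma)^3)$ and $\mathrm{tr}(A(\Gamma)^2)$. Combining the upper and lower bounds on $\mathrm{tr}(A(\Gamma)^4)$ and using $\mathrm{tr}(A(\Gamma)^3)\le 0$ together with $\lambda_1+\lambda_2\ge 0$ should eliminate the fourth moment and leave an inequality directly implying $\lambda_1^2+\lambda_2^2\le m$.

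The main obstacle is this last algebraic step. A naive power-mean application to $\mathrm{tr}(A(\Gamma)^4)\le 2m\lambda_1^2$ only produces $\lambda_1^2+\lambda_2^2\le 2m$, losing a factor of two. Recovering the sharp constant $m$ requires using the full strength of all three ingredients ($\mathrm{tr}(A^3)\le 0$, $\lambda_1\ge|\lambda_n|$, and $\lambda_1\ge\lambda_2$) simultaneously, probably with a case split according to whether $\lambda_n^2\ge\lambda_2^2$ (in which case $\lambda_1^2+\lambda_2^2\le\lambda_1^2+\lambda_n^2$ can be handled directly by the previous displayed inequality applied to the pair $(\lambda_1,\lambda_n)$) or $\lambda_n^2<\lambda_2^2$. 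The extremal configuration of two disjoint balanced complete bipartite graphs with equal parts, whose spectrum is $\{\sqrt{m/2},\sqrt{m/2},0,\ldots,0,-\sqrt{m/2},-\sqrt{m/2}\}$, saturates every inequality involved and should pin down the correct combining constants.
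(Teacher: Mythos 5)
Your first two steps are sound: $\mathrm{tr}(A(\Gamma)^3)=6(t^+-t^-)\le 0$ when there are no balanced triangles, and both quadratic-multiplier sums $\sum_i\lambda_i^2(\lambda_1-\lambda_i)(\lambda_i-\lambda_n)\ge 0$ and $\sum_i\lambda_i^2(\lambda_i-\lambda_1)(\lambda_i-\lambda_2)\ge 0$ are valid, giving the bounds you state. But the final step, which you yourself flag as the main obstacle, is a genuine gap, and the specific combination you propose provably yields nothing: subtracting the lower bound $\mathrm{tr}(A^4)\ge(\lambda_1+\lambda_2)\mathrm{tr}(A^3)-2m\lambda_1\lambda_2$ from the upper bound $\mathrm{tr}(A^4)\le(\lambda_1+\lambda_n)\mathrm{tr}(A^3)-2m\lambda_1\lambda_n$ cancels the fourth moment entirely and leaves $(\lambda_2-\lambda_n)\mathrm{tr}(A^3)\le 2m\lambda_1(\lambda_2-\lambda_n)$, i.e.\ $\mathrm{tr}(A^3)\le 2m\lambda_1$, which is vacuous. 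The same factor-of-two loss afflicts your suggested case split: from $\mathrm{tr}(A^4)\le -2m\lambda_1\lambda_n$ and $\mathrm{tr}(A^4)\ge\lambda_1^4+\lambda_n^4$ one only gets $\lambda_1^2+|\lambda_n|^3/\lambda_1\le 2m$, not $\lambda_1^2+\lambda_n^2\le m$. The sharp constant simply cannot be reached by playing the second, third and fourth moments against $|\lambda_i|\le\lambda_1$; what is needed is a direct comparison between the top two eigenvalues and the whole negative part of the spectrum, and $\mathrm{tr}(A^4)$ is the wrong vehicle for it.

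The missing idea (used in the cited source \cite{Kan}, and in essence reused by this paper in Case 1 of the proof of Theorem \ref{AD2}) is the decomposition $2m=s^++s^-$ with $s^{\pm}$ the sums of squares of the positive/negative eigenvalues, combined with the $\ell^{3/2}$ power-sum comparison of Lemma \ref{Bo3}. Assume for contradiction that $\lambda_1^2+\lambda_2^2>m$ and that $\lambda_2>0$ (the case $\lambda_2\le 0$ needs a short separate argument). Then $s^+\ge\lambda_1^2+\lambda_2^2>m>s^-$, so every partial sum of $\mathbf{x}=(\lambda_1^2,\lambda_2^2,0,\dots,0)$ dominates the corresponding partial sum of $\mathbf{y}=(\lambda_n^2,\lambda_{n-1}^2,\dots)$; the first partial sum is exactly where the hypothesis $\lambda_1\ge|\lambda_n|$ enters. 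Lemma \ref{Bo3} with $p=3/2$ then gives $\lambda_1^3+\lambda_2^3>\sum_{\lambda_i<0}|\lambda_i|^3$ (strictly, since $\mathbf{x}\ne\mathbf{y}$), hence $\mathrm{tr}(A^3)>0$, contradicting $\mathrm{tr}(A^3)=-6t^-\le 0$. Note finally that the paper itself offers no proof of this lemma --- it is quoted from \cite[Theorem 4.2]{Kan} --- so your attempt is being measured against that source rather than against anything in the text.
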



\begin{lemma}\emph{(See \cite[Theorem 2.1]{Bo})}\label{Bo3}
	Let $\mathbf{x}=(x_{1}, x_{2},\dots, x_{n})^T $ and $\mathbf{y}=(y_{1}, y_{2},\dots, y_{n})^T$, where $x_i, y_i~(i=1,2,\ldots,n)$ are non-negative real numbers. If $\left \{x_{i}\right\} _{i=1}^{n}$ and $\left \{y_{i}\right\}_{i=1}^{n}$ are in non-increasing order, and
	$$\sum_{i=1}^k x_i\ge \sum_{i=1}^k y_i, ~~k=1,2,\ldots,n,$$
	then $||\mathbf{x}||_{p} \ge ||\mathbf{y}||_{p}$ for every real number $p> 1$, where equality holds if and only if $\mathbf{x}=\mathbf{y}$.
\end{lemma}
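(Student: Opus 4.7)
The plan is to reduce the inequality to two classical ingredients: the tangent-line (subgradient) inequality for the strictly convex function $f(t)=t^{p}$ on $[0,\infty)$, and Abel summation exploiting the monotonicity of $\{y_{k}\}$. First I would note that for $p>1$ and $a,b\ge 0$,
\[
a^{p}-b^{p}\ge p\,b^{p-1}(a-b),
\]
with equality if and only if $a=b$; this is convexity of $t\mapsto t^{p}$ applied at $b$, and strict convexity (together with $f'(0^{+})=0$ when $b=0$) rules out other equality cases. Setting $a=x_{k}$, $b=y_{k}$ and summing over $k$ gives
\[
||\mathbf{x}||_{p}^{p}-||\mathbf{y}||_{p}^{p} \ge p\sum_{k=1}^{n} y_{k}^{p-1}(x_{k}-y_{k}).
\]

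Next I would show the right-hand side is non-negative via Abel summation. Let $a_{k}=x_{k}-y_{k}$, $b_{k}=y_{k}^{p-1}$, and $S_{k}=\sum_{i=1}^{k}a_{i}=\sum_{i=1}^{k}x_{i}-\sum_{i=1}^{k}y_{i}$. By hypothesis $S_{k}\ge 0$ for every $k\in\{1,\dots,n\}$, and since $\{y_{k}\}$ is non-increasing and non-negative with $p-1>0$, the sequence $\{b_{k}\}$ is non-increasing and non-negative as well. Abel summation then yields
\[
\sum_{k=1}^{n} a_{k}b_{k} = S_{n}b_{n} + \sum_{k=1}^{n-1} S_{k}(b_{k}-b_{k+1}) \ge 0,
\]
since every summand on the right-hand side is non-negative. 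Combining the two displays gives $||\mathbf{x}||_{p}^{p}\ge ||\mathbf{y}||_{p}^{p}$, hence $||\mathbf{x}||_{p}\ge ||\mathbf{y}||_{p}$.

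For the equality case, if $||\mathbf{x}||_{p}=||\mathbf{y}||_{p}$ then the entire chain of inequalities collapses; in particular the tangent-line bound is an equality for every $k$. By strict convexity of $t\mapsto t^{p}$ on $[0,\infty)$ (for $p>1$), this forces $x_{k}=y_{k}$ for all $k$, so $\mathbf{x}=\mathbf{y}$; the converse direction is trivial. The most delicate point I anticipate is precisely the equality analysis, because one might be tempted to extract it from the Abel step, where equality requires either $S_{k}=0$ or $b_{k}=b_{k+1}$ termwise and leads to a cluttered case split. The cleaner route is to route all the strictness through the convexity step at the very start, which directly pins down $\mathbf{x}=\mathbf{y}$ with no further bookkeeping.
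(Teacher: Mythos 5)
Your proof is correct. Note, however, that the paper does not prove this statement at all: it is imported verbatim as Lemma 4.2 with a citation to Lin, Ning and Wu \cite[Theorem~2.1]{Bo}, so there is no in-paper argument to compare against. Your argument is the standard proof of the weak-majorization inequality (Tomi\'c/Hardy--Littlewood--P\'olya) specialized to $\phi(t)=t^{p}$: the tangent-line bound $a^{p}-b^{p}\ge p\,b^{p-1}(a-b)$ is valid for all $a,b\ge 0$ and $p>1$ (including $b=0$, where both sides reduce to $a^{p}\ge 0$), the Abel summation is correct since $\{y_{k}^{p-1}\}$ inherits monotonicity from $\{y_{k}\}$ because $t\mapsto t^{p-1}$ is non-decreasing on $[0,\infty)$, and routing the equality analysis entirely through the strict-convexity step is a sound way to avoid the case split in the Abel step: equality in $\|\mathbf{x}\|_{p}=\|\mathbf{y}\|_{p}$ forces each non-negative summand $x_{k}^{p}-y_{k}^{p}-p\,y_{k}^{p-1}(x_{k}-y_{k})$ to vanish, hence $x_{k}=y_{k}$ for every $k$. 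The proof is complete and self-contained, which is arguably a service here since the paper leaves the lemma as a black box.
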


A walk $v_1v_2\dots v_k$ ($k\ge 2$) in a graph $G$ is called an \emph{internal path} if these $k$ vertices are distinct (except possibly $v_1=v_k$), $d(v_1)\ge 3, d(v_k)\ge 3$ and $d(v_2)=\dots =d(v_{k-1})=2$ (unless $k=2$). Let $G_{uv}$ denote the graph obtained by subdividing the edge $uv$ of $G$. Clearly, subdividing an edge introduces a new vertex on that edge. Let $Y_n$ be the graph obtained from an induced path $v_1v_2\cdots v_{k-4}$ by attaching two pendant vertices to each of $v_1$ and $v_{k-4}$.

\begin{lemma}\emph{(See \cite{HoffmanS75})}\label{Bo4}
	Let $G$ be a connected graph of order $n$ with $uv\in E(G)$. If $uv$ belongs to an internal path of $G$ and $G\not\cong Y_{n}$, then $\lambda_{1}(G_{uv})<\lambda_{1}(G)$.
\end{lemma}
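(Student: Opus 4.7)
The plan is to prove this classical Hoffman–Smith lemma by a Perron–Frobenius/Rayleigh-quotient comparison between $G$ and $G_{uv}$. Let $\rho=\lambda_{1}(G)$ and $\rho'=\lambda_{1}(G_{uv})$, with positive Perron eigenvectors $\mathbf{x}$ and $\mathbf{y}$ respectively; write $w$ for the new vertex of $G_{uv}$ inserted on $uv$, so that the eigenvalue equation at $w$ gives $\rho' y_{w}=y_{u}+y_{v}$.

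First, I would argue by contradiction: assume $\rho'\ge \rho$ and build a trial vector $\mathbf{z}$ on $V(G)$ from $\mathbf{y}$ by setting $z_{a}=y_{a}$ for $a\notin\{u,v\}$ and redistributing the mass $y_{w}$ between $z_{u}$ and $z_{v}$ via $z_{u}=y_{u}+t y_{w}$, $z_{v}=y_{v}+(1-t)y_{w}$ for a parameter $t\in[0,1]$. A direct computation using $y_{u}+y_{v}=\rho' y_{w}$ reduces $\mathbf{z}^{T}A(G)\mathbf{z}-\rho'\|\mathbf{z}\|^{2}$ to an explicit quadratic in $t$ whose coefficients involve $\rho'$ and the eigenvector values at the neighbors of $u$ and $v$.

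Second, since $uv$ lies on an internal path $w_{0}w_{1}\cdots w_{l}$ of $G$, the eigenvalue equation at each degree-$2$ vertex of that path forces the three-term recurrence $\rho' y_{w_{i}}=y_{w_{i-1}}+y_{w_{i+1}}$, which pins down the ratios $y_{u}/y_{u'}$ and $y_{v}/y_{v'}$ (with $u',v'$ the path-neighbors of $u$ and $v$) purely in terms of $\rho'$. Substituting these relations into the quadratic from the previous step shows that the maximum over $t$ is strictly positive unless the boundary configuration along the internal path realises the Perron eigenvector of a Smith graph, yielding $\mathbf{z}^{T}A(G)\mathbf{z}>\rho'\|\mathbf{z}\|^{2}$ and hence $\rho>\rho'\ge\rho$, a contradiction.

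The main obstacle is the equality case: the quadratic maximum vanishes precisely when $\rho'=2$ and the eigenvector along the internal path is affine, which by Smith's classification of connected graphs with spectral radius exactly $2$ (the extended Dynkin diagrams $\widetilde{A}_{n},\widetilde{D}_{n},\widetilde{E}_{6,7,8}$) forces $G$ to be one of these graphs. Among them, only $\widetilde{D}_{n}=Y_{n}$ actually contains an internal path in the required sense, namely a walk whose two endpoints both have degree $\ge 3$ and whose interior vertices have degree $2$; the cycles $\widetilde{A}_{n}$ have no vertex of degree $\ge 3$ and the exceptional diagrams $\widetilde{E}_{6,7,8}$ contain at most one branch vertex. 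Consequently $G\cong Y_{n}$ is the unique obstruction, and the strict inequality $\lambda_{1}(G_{uv})<\lambda_{1}(G)$ holds in all other cases.
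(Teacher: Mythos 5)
The paper does not prove this lemma at all: it is quoted verbatim from Hoffman and Smith \cite{HoffmanS75} and used as a black box, so your proposal is being measured against the classical proof rather than anything in the text. Unfortunately, your central construction does not work. Carry out the ``direct computation'' you describe: with $z_a=y_a$ for $a\notin\{u,v\}$, $z_u=y_u+ty_w$, $z_v=y_v+(1-t)y_w$, and using the eigenvalue equations $\rho'y_u=y_w+S_u$, $\rho'y_v=y_w+S_v$, $\rho'y_w=y_u+y_v$, one gets
\begin{equation*}
\mathbf{z}^{T}A(G)\mathbf{z}-\rho'\|\mathbf{z}\|^{2}
=2y_uy_v-2ty_uy_w-2(1-t)y_vy_w+2\bigl[(1+\rho')t(1-t)-1\bigr]y_w^{2},
\end{equation*}
a downward parabola in $t$ whose global maximum equals
\begin{equation*}
\frac{\rho'^{2}-\rho'-3}{2}\,y_w^{2}-\frac{\rho'\,(y_u-y_v)^{2}}{2(1+\rho')}.
\end{equation*}
This is \emph{negative} whenever $\rho'<\tfrac{1+\sqrt{13}}{2}\approx 2.303$, for every choice of $t$ and regardless of any further relations coming from the internal path (note that the quadratic depends only on $y_u,y_v,y_w,\rho'$, so the ratios $y_u/y_{u'}$, $y_v/y_{v'}$ you propose to substitute cannot enter). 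Graphs satisfying all hypotheses with $\rho'$ arbitrarily close to $2$ exist --- e.g.\ take $Y_n$ with one extra pendant vertex attached to a branch vertex and a long internal path, whose spectral radius tends to $3/\sqrt{2}\approx 2.121$ --- so the method fails on a whole family of legitimate instances, not just in a degenerate boundary case. Your description of the equality analysis is also inconsistent with the computation: at $\rho'=2$ the maximum equals $-\tfrac12 y_w^{2}-\tfrac{(y_u-y_v)^2}{3}<0$, it does not vanish.

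The underlying reason is that redistributing only the mass of the single new vertex $w$ between its two neighbours is too local. The actual Hoffman--Smith argument is global along the internal path: one first observes that a graph with an internal path and $G\not\cong Y_n$ properly contains $\widetilde{D}_m$ or a cycle with an attached edge, hence $\lambda_1(G)>2$; one then analyses the Perron entries along the \emph{entire} path of $G_{uv}$ (they satisfy $y_{i}=As^{i}+Bs^{-i}$ with $s+s^{-1}=\rho'$), uses the boundary conditions at the two endpoints of degree at least $3$ to control $A$ and $B$, and deletes an interior path vertex where the entry is small enough (below the harmonic mean of its neighbours) to produce a test vector on $G$ with Rayleigh quotient exceeding $\rho'$; equivalently one argues via characteristic polynomials. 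Your final paragraph (Smith's classification, and the observation that among $\widetilde{A}_n$, $\widetilde{D}_n$, $\widetilde{E}_{6,7,8}$ only $\widetilde{D}_n\cong Y_n$ possesses an internal path) is correct and is indeed how the exceptional case is isolated, but it only becomes relevant after a valid argument for the case $\lambda_1>2$ is in place.
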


Let $C^{(r)}_{n}$, $0\le r\le n-1$, denote a signed cycle of order $n$ and size $n$ with $r$ negative edges, where the underlying graph is the cycle $C_{n}$. For an integer $r$,  define $$
[r]=\left\{\begin{array}{ll}
	0, & \text{ if $r$ is even, } \\[0.2cm]
	1, &  \text{ if $r$ is odd. }
\end{array}
\right.
$$

\begin{lemma}\emph{(See \cite{Ger})}\label{Ger}
	Eigenvalues of $C^{(r)}_{n}$ are given by $$
	\lambda_{j}(C^{(r)}_{s})=2\cos\left(\frac{(2j-[r])\pi}{n}\right), \text{~for~}  j=1,2, \dots, n.
	$$
\end{lemma}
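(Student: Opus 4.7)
\medskip

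The plan is to reduce the statement to two canonical representatives via switching equivalence and then diagonalize each directly using a root-of-unity (or root-of-$-1$) ansatz.

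First, I would observe that the sign of the product of edge signs around the unique cycle of $C_n^{(r)}$ equals $(-1)^r$, so $C_n^{(r)}$ is balanced precisely when $r$ is even. Since any balanced signed graph is switching equivalent to its underlying graph with all-positive signs, $C_n^{(r)} \sim C_n^{(0)}$ when $r$ is even. For odd $r$, I would show that $C_n^{(r)}$ is switching equivalent to $C_n^{(1)}$ by concentrating all negative signs at a single edge: choose a spanning tree of $C_n$ (delete one edge), and switch at the vertex set determined by the unique $\theta:V\to\{\pm 1\}$ that makes every tree edge positive; the remaining (non-tree) edge then carries the sign $(-1)^r=-1$. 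By Lemma \ref{Yao}, switching equivalent signed graphs have the same spectrum, so it suffices to establish the two claimed eigenvalue formulas for $C_n^{(0)}$ and $C_n^{(1)}$.

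For the balanced case $[r]=0$, label the vertices cyclically by $0,1,\dots,n-1$; then $A(C_n^{(0)})$ is the standard circulant with $1$s on the sub- and super-diagonal (and in the two corners). The vectors $\mathbf{v}_j=(1,\omega^j,\omega^{2j},\dots,\omega^{(n-1)j})^T$ with $\omega=e^{2\pi \mi /n}$ are eigenvectors with eigenvalue $\omega^j+\omega^{-j}=2\cos(2j\pi/n)$ for $j=1,\dots,n$, which is exactly $2\cos((2j-[r])\pi/n)$ with $[r]=0$.

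For the unbalanced case $[r]=1$, place the single negative edge between vertex $n-1$ and vertex $0$. The key step is to try an ansatz of the same circulant form, but with base $\zeta$ satisfying $\zeta^n=-1$ rather than $\zeta^n=1$. For an interior vertex $k$ with $1\le k\le n-2$, the eigenvalue equation reads $\zeta^{k-1}+\zeta^{k+1}=\lambda\,\zeta^k$, giving $\lambda=\zeta+\zeta^{-1}$. At vertex $0$, the negative edge to vertex $n-1$ contributes $-\zeta^{n-1}$; using $\zeta^n=-1$, this equals $\zeta^{-1}$, so the equation becomes $\zeta+\zeta^{-1}=\lambda$, and similarly at vertex $n-1$. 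Thus every $n$-th root of $-1$ yields an eigenvector, and the $n$ such roots are $\zeta_j=e^{\mi(2j-1)\pi/n}$ for $j=1,\dots,n$, producing eigenvalues $2\cos((2j-1)\pi/n)=2\cos((2j-[r])\pi/n)$ with $[r]=1$. Counting multiplicities (and noting that $\zeta_j$ and $\zeta_{n-j+1}$ are complex conjugates yielding the same real eigenvalue, matching the multiplicity pattern of a real symmetric matrix) shows we have recovered the full spectrum.

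The main obstacle is a clean bookkeeping step: in the unbalanced case one must verify that the $n$ values of $\zeta_j$ above exhaust all eigenvalues (with the correct multiplicities) rather than merely give $n$ solutions among possibly more. I would handle this by noting that the $n$ eigenvectors $(1,\zeta_j,\zeta_j^2,\dots,\zeta_j^{n-1})^T$ are linearly independent (their coordinate matrix is a Vandermonde in the distinct scalars $\zeta_j$), so together with the symmetry $\zeta_j\leftrightarrow\bar\zeta_j$ they span $\mathbb{R}^n$ after taking real/imaginary parts, thereby accounting for the entire spectrum of the $n\times n$ symmetric matrix $A(C_n^{(1)})$.
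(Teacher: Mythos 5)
Your argument is correct and complete. Note, however, that the paper offers no proof of this lemma at all: it is quoted verbatim from the cited reference of Germina and Hameed, so there is nothing internal to compare against. Your route is the standard one and it works: the sign product around the cycle is a switching invariant, and conversely any two signatures of $C_n$ with the same product are switching equivalent (your spanning-tree/path normalization), so by Lemma \ref{Yao} only the representatives $C_n^{(0)}$ and $C_n^{(1)}$ need to be diagonalized. The circulant ansatz with $\zeta^n=1$ handles the balanced case, and the boundary check you perform at vertices $0$ and $n-1$ with $\zeta^n=-1$ is exactly right for the unbalanced case (the negative edge contributes $-\zeta^{n-1}=\zeta^{-1}$ at vertex $0$ and $-1=\zeta^n$ at vertex $n-1$). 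Your completeness argument is also sound, though slightly overdressed: the $n$ vectors $(1,\zeta_j,\dots,\zeta_j^{n-1})^T$ at distinct nodes $\zeta_j$ are linearly independent by the Vandermonde determinant, and $n$ linearly independent eigenvectors of an $n\times n$ matrix already determine the full spectrum with multiplicities over $\mathbb{C}$; no passage to real and imaginary parts is needed. One cosmetic remark: the formula as stated in the lemma does not list the eigenvalues in the nonincreasing order $\lambda_1\ge\cdots\ge\lambda_n$ used elsewhere in the paper (e.g.\ for $[r]=0$ the index $j=n$ gives the largest value $2$), but this is an infelicity of the quoted statement, not of your proof, which correctly produces the same multiset.
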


\begin{lemma}\emph{(See \cite{Cvet})}\label{Cvet}
	Let $\Gamma$ be a signed graph and $U$ a subset of $V(\Gamma)$ with $|U|=k$. Then
	$$
	\lambda_{i}(\Gamma) \ge \lambda_{i}(\Gamma[V(\Gamma)\setminus U]) \ge \lambda_{i+k}(\Gamma),~~\text{for $1 \le i \le n-k$}.
	$$
\end{lemma}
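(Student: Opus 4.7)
The plan is to recognize Lemma \ref{Cvet} as the classical Cauchy interlacing theorem applied to a principal submatrix of a real symmetric matrix. First I would observe that $A(\Gamma[V(\Gamma)\setminus U])$ is exactly the principal submatrix of $A(\Gamma)$ obtained by deleting the $k$ rows and columns indexed by the vertices of $U$, since the edge signs on $V(\Gamma)\setminus U$ are inherited unchanged. Because sign values only toggle nonzero entries between $+1$ and $-1$ while preserving symmetry, $A(\Gamma)$ is a real symmetric matrix and so is every principal submatrix. Hence the signed-graph statement reduces verbatim to the Cauchy interlacing theorem for principal submatrices of a real symmetric matrix.

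Next I would handle the base case $k=1$ directly via the Courant--Fischer characterization. Fix $v\in V(\Gamma)$, set $W=V(\Gamma)\setminus\{v\}$, and regard $\mathbb{R}^{W}$ as the subspace of $\mathbb{R}^{V(\Gamma)}$ consisting of vectors whose $v$-coordinate vanishes; for any such vector $x$ one has $x^{T}A(\Gamma)x=x^{T}A(\Gamma[W])x$. For the inequality $\lambda_{i}(\Gamma)\ge\lambda_{i}(\Gamma[W])$, pick an $i$-dimensional subspace $S\subset\mathbb{R}^{W}$ realizing
$$\lambda_{i}(\Gamma[W])=\max_{\dim S=i,\,S\subset\mathbb{R}^{W}}\min_{0\ne x\in S}\frac{x^{T}A(\Gamma[W])x}{x^{T}x};$$
regarding $S$ as an $i$-dimensional subspace of $\mathbb{R}^{V(\Gamma)}$ gives a feasible competitor in the max-min formula for $\lambda_{i}(\Gamma)$ with identical Rayleigh quotients. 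For the inequality $\lambda_{i}(\Gamma[W])\ge\lambda_{i+1}(\Gamma)$, use the dual characterization
$$\lambda_{i}(\Gamma[W])=\min_{\dim T=n-i,\,T\subset\mathbb{R}^{W}}\max_{0\ne x\in T}\frac{x^{T}A(\Gamma[W])x}{x^{T}x};$$
a minimizing $T^{*}$ is an $(n-i)$-dimensional subspace of $\mathbb{R}^{V(\Gamma)}$, so by the min-max formula $\lambda_{i+1}(\Gamma)\le\max_{0\ne x\in T^{*}}x^{T}A(\Gamma)x/x^{T}x=\lambda_{i}(\Gamma[W])$.

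Finally, I would iterate: delete the vertices of $U$ one at a time, producing a chain $\Gamma=\Gamma_{0}\supset\Gamma_{1}\supset\cdots\supset\Gamma_{k}=\Gamma[V(\Gamma)\setminus U]$ in which each step removes a single vertex. The base case applied to $\Gamma_{j}\supset\Gamma_{j+1}$ gives $\lambda_{i}(\Gamma_{j})\ge\lambda_{i}(\Gamma_{j+1})$ and $\lambda_{i}(\Gamma_{j+1})\ge\lambda_{i+1}(\Gamma_{j})$. Composing the first family of inequalities for $j=0,1,\dots,k-1$ yields $\lambda_{i}(\Gamma)\ge\lambda_{i}(\Gamma[V(\Gamma)\setminus U])$, while composing the second family (with the index on the submatrix side shifted by one at each step) yields $\lambda_{i}(\Gamma[V(\Gamma)\setminus U])\ge\lambda_{i+k}(\Gamma)$. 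There is essentially no conceptual obstacle; the lemma is a direct transcription of Cauchy interlacing to the signed-graph setting, and the only care required is bookkeeping of the index shifts during iteration.
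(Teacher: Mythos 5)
Your proof is correct. The paper does not prove this lemma at all---it is quoted as a known result from Cvetkovi\'{c}, Doob and Sachs---so there is nothing to compare against; your argument is the standard one: the adjacency matrix of an induced signed subgraph is a principal submatrix of the (real symmetric) adjacency matrix of $\Gamma$, the $k=1$ case is Cauchy interlacing via the two Courant--Fischer characterizations, and the general case follows by deleting the vertices of $U$ one at a time with the correct index bookkeeping.
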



\begin{Tproof}\textbf{~of~Theorem~\ref{AD2}.}~Recall \cite[Theorem~1.3]{Bo} that the result is  true if $\Gamma$ is balanced.
	
	In the following, we prove that the result is also true if $\Gamma$ is unbalanced. By contradiction, we assume that $\Gamma$ contains no balanced triangle.  Consider the following two cases.
	
	\noindent\emph{Case~1.}~Assume that $\lambda_{2}(\Gamma) \ge 1$. Then $\lambda_{1}(\Gamma)^{2}+\lambda_{2}(\Gamma)^{2} \ge m$. Note that $\Gamma$ contains no balanced triangle. By Lemma \ref{Kan}, we have
	$$
	\lambda_{1}(\Gamma)^{2}+\lambda_{2}(\Gamma)^{2}=m.
	$$
	Let $n_{\Gamma}^{+}$, $n_{\Gamma}^{-}$ and $n_{\Gamma}^{0}$ denote the numbers (with multiplicities) of positive, negative and zero eigenvalues of $A(\Gamma)$, respectively. Let
	$$ s^{+}=\lambda_{1}(\Gamma)^{2}+\lambda_{2}(\Gamma)^{2}+\dots+ \lambda_{n_{\Gamma}^{+}}(\Gamma)^{2}
	$$
	and $$s^{-}=\lambda_{n-n_{\Gamma}^{-}+1}(\Gamma)^{2}+ \lambda_{n-n_{\Gamma}^{-}+2}(\Gamma)^{2}+\dots+\lambda_{n}(\Gamma)^{2}.
	$$
	Then
	$$
	\lambda_{1}(\Gamma)^{2}+\lambda_{2}(\Gamma)^{2}=m=\frac{s^{+}+s^{-}}{2}.
	$$
	Thus, we have
	$$\lambda_{1}(\Gamma)^{2}+\lambda_{2}(\Gamma)^{2} \ge 2(\lambda_{1}(\Gamma)^{2}+\lambda_{2}(\Gamma)^{2})-s^{+} =  s^{-} \ge 0.
	$$
	
	Set $\mathbf{x}=(\lambda_{1}(\Gamma)^2, \lambda_{2}(\Gamma)^2, 0,\dots, 0)^T$ and $\mathbf{y}=(\lambda_{n}(\Gamma)^2, \lambda_{n-1}(\Gamma)^2, \dots, \lambda_{n-n_{\Gamma}^{-}+1}(\Gamma))^2$. By Lemma \ref{Bo3}, we have
	$$
	||\mathbf{x}||_{\frac{3}{2}}^{\frac{3}{2}} \ge ||\mathbf{y}||_{\frac{3}{2}}^{\frac{3}{2}}.
	$$
	It implies that
	$$
	\lambda_{1}(\Gamma)^{3}+\lambda_{2}(\Gamma)^{3} \ge -\left(\lambda_{n}(\Gamma)^{3}+\lambda_{n-1}(\Gamma)^{3}+\dots +\lambda_{n-n_{\Gamma}^{-}+1}(\Gamma)^{3}\right).
	$$
	
	Let $t_{\Gamma}^{+}$ (respectively, $t_{\Gamma}^{-}$) denote the number of balanced (respectively, unbalanced) triangles. Recall that $\Gamma$ contains no balanced triangle. Then $t_{\Gamma}^{+}=0$. So,
	$$
	6(t_{\Gamma}^{+}-t_{\Gamma}^{-})\ge \lambda_{1}(\Gamma)^{3}+\lambda_{2}(\Gamma)^{3}+ \lambda_{n}(\Gamma)^{3}+\lambda_{n-1}(\Gamma)^{3}+ \dots+\lambda_{n-n_{\Gamma}^{-}+1}(\Gamma)^{3}\ge 0.
	$$
	This indicates that $t_{\Gamma}^{-}=0$. Hence $\Gamma$ contains no triangle, that is, $\Gamma$ is triangle-free.
	
	By Lemma \ref{Sun}, let $\Gamma^{\prime}$ be a signed graph such that $\Gamma^{\prime} \sim \Gamma$ and $\lambda_{1}(\Gamma^{\prime})$ has a non-negative eigenvector. Let $S$ be the set of underlying edges corresponding to $E^{-}(\Gamma')$. Then $\Gamma'-S$ is balanced. By Lemma \ref{LanCor-S},
	\begin{equation}\label{2}
		\lambda_{1}(\Gamma'-S)>\lambda_{1}(\Gamma^{\prime}) =\lambda_{1}(\Gamma) \ge \sqrt{m-1} \ge \sqrt{m_{1}},
	\end{equation}
	where $m_{1}$ is the size of $\Gamma'-S$. Since $\Gamma$ is triangle-free, $\Gamma'-S$ is triangle-free. By \cite[Theorem~2.1]{Nik2}, $\Gamma'-S$ is a complete bipartite graph. This implies that $\Gamma$ contains at least one triangle, a contradiction.


	\noindent\emph{Case~2.}~Assume that $\lambda_{2}(\Gamma)<1$. This implies that if $\Gamma$ is disconnected then its every component is an isolated vertex except for one component.
	

	We assume that $\Gamma$ is connected. Note that the adjacency eigenvalues of an unbalanced triangle are $ 1,  1,  2 $.  If $\Gamma$ has an unbalanced triangle, by Lemma \ref{Cvet}, $\lambda_{2}(\Gamma) \ge 1$, a contradiction. So, $\Gamma$ is triangle-free.

	Let $s$ be the length of a shortest odd cycle of $\Gamma$. Then $s \ge 5$.
	If $s \ge 6$, by Lemma \ref{Ger}, $\lambda_{2}(C^{(r)}_{s}) \ge 1$. By Lemma \ref{Cvet}, $\lambda_{2}(\Gamma) \ge \lambda_{2}(C^{(r)}_{s}) \ge 1$, a contradiction. Thus $s=5$.
	
	\setcounter{claim}{0}
	\begin{claim}\label{triangle-1}
		$\Gamma$ has no unbalanced $C_{5}$.
	\end{claim}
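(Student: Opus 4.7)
The plan is to argue by contradiction. Suppose $\Gamma$ contains an unbalanced $5$-cycle on some vertex set $W=\{v_1,v_2,\ldots,v_5\}$. The first observation I would exploit is that, since $\Gamma$ has already been shown to be triangle-free in Case~2, this $5$-cycle cannot have any chord — a chord of a $5$-cycle would create a triangle. Hence $\Gamma[W]$ is itself an unbalanced $5$-cycle, switching equivalent to $C_5^{(r)}$ for some odd $r\in\{1,3,5\}$.

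The second step is a direct spectral computation via Lemma \ref{Ger}. Because $r$ is odd, $[r]=1$, so the eigenvalues of $C_5^{(r)}$ are $2\cos\!\left(\tfrac{(2j-1)\pi}{5}\right)$ for $j=1,2,\ldots,5$. Arranged in non-increasing order, the largest two both equal $2\cos(\pi/5)=\tfrac{1+\sqrt{5}}{2}$, followed by two copies of $2\cos(3\pi/5)=\tfrac{1-\sqrt{5}}{2}$ and then $-2$. In particular $\lambda_2(C_5^{(r)})=\tfrac{1+\sqrt{5}}{2}>1$.

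The final step is to invoke eigenvalue interlacing. Since $\Gamma[W]\sim C_5^{(r)}$, Lemma \ref{Yao} ensures that the two signed graphs share the same spectrum; combining this with Lemma \ref{Cvet} applied to the induced subgraph on $W$ (i.e.\ deleting the $n-5$ vertices in $V(\Gamma)\setminus W$) yields
$$
\lambda_2(\Gamma)\ \ge\ \lambda_2(\Gamma[W])\ =\ \lambda_2(C_5^{(r)})\ =\ \frac{1+\sqrt{5}}{2}\ >\ 1,
$$
contradicting the Case~2 hypothesis $\lambda_2(\Gamma)<1$. I do not anticipate any real obstacle: the claim is a short combination of triangle-freeness (forcing any putative $5$-cycle to be induced), the explicit spectrum of $C_5^{(r)}$ supplied by Lemma \ref{Ger}, and the eigenvalue interlacing supplied by Lemma \ref{Cvet}.
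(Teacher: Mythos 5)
Your proposal is correct and follows essentially the same route as the paper: the paper likewise combines Lemma \ref{Ger} (giving $\lambda_2$ of an unbalanced $C_5$ as $2\cos(\pi/5)>1$) with the interlacing of Lemma \ref{Cvet} to contradict $\lambda_2(\Gamma)<1$. Your extra remarks — that triangle-freeness forces the $5$-cycle to be induced (so interlacing applies to it directly) and the explicit listing of the spectrum — are details the paper leaves implicit, not a different argument.
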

	
	\noindent\emph{Proof of Claim 1.}~If $\Gamma$ has an unbalanced $(C_{5},\sigma_{C_{5}})$, by Lemmas \ref{Ger} and \ref{Cvet}, we have
	$$\lambda_{2}(\Gamma)\ge \lambda_{2}((C_{5},\sigma_{C_{5}}))=2\cos\left(\frac{\pi}{5}\right)>1,$$
	a contradiction.

	\begin{claim}\label{triangle-2}
		$\Gamma$ does not contain $(H_{i},\sigma_{i})$ as an induced subgraph, that is, $\Gamma$ is $(H_{i},\sigma_{i})$-free, where $H_{i} ~(i=1,2,3)$ are shown in Figure \ref{figure1}.
	\end{claim}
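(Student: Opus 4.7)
The plan is to mirror the short interlacing argument used for Claim \ref{triangle-1}. Suppose for contradiction that $\Gamma$ contains $(H_i,\sigma_i)$ as an induced subgraph for some $i\in\{1,2,3\}$. By Lemma \ref{Cvet},
\[
\lambda_{2}(\Gamma)\ge \lambda_{2}((H_i,\sigma_i)),
\]
so it suffices to verify $\lambda_{2}((H_i,\sigma_i))\ge 1$ for each $i$; this immediately contradicts the standing hypothesis $\lambda_{2}(\Gamma)<1$ of Case~2.

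For each fixed $i$ I would first invoke Lemma \ref{Yao} to replace $(H_i,\sigma_i)$ by a convenient switching-equivalent representative with as few negative edges as possible, since switching preserves the adjacency spectrum. Because each $H_i$ has only a small number of vertices, the characteristic polynomial $\phi_i(x) := \det(xI-A((H_i,\sigma_i)))$ can be written down explicitly. Locating its second largest root, or equivalently verifying that $\phi_i(1)\le 0$ together with $\lambda_{1}((H_i,\sigma_i))>1$, then yields $\lambda_{2}((H_i,\sigma_i))\ge 1$. Alternatively, by the Courant--Fischer min-max principle it suffices to exhibit a two-dimensional test subspace $U\subseteq \mathbb{R}^{|V(H_i)|}$ on which the Rayleigh quotient of $A((H_i,\sigma_i))$ is bounded below by $1$; this avoids computing eigenvalues outright and often makes the inequality transparent.

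The main obstacle I anticipate is purely bookkeeping. The three graphs $H_i$ with their prescribed signings $\sigma_i$ must each be treated separately, and for each one I must verify that the chosen representative is uniquely determined up to switching by the structural constraints already proved about $\Gamma$ (triangle-free, odd girth exactly $5$, and, by Claim \ref{triangle-1}, every induced $5$-cycle balanced). Once a canonical signing is fixed, the spectral verification is routine arithmetic. A useful organizing principle is that each $H_i$ is presumably built from a balanced $C_5$ by a small modification (attaching a pendant vertex, an extra path, or a second balanced $5$-cycle sharing a vertex or edge), so the spectra of the $(H_i,\sigma_i)$ can be computed directly from that of the balanced $C_5$, or estimated by further application of Lemma \ref{Cvet} against known subgraphs whose second eigenvalue is at least~$1$.
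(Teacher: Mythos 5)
Your strategy is essentially the paper's: use Claim \ref{triangle-1} to force the induced $C_5$ inside any $(H_i,\sigma_i)$ to be balanced, reduce the remaining signings to a few representatives up to switching (Lemma \ref{Yao}), and then show $\lambda_{2}((H_i,\sigma_i))\ge 1$ by direct computation so that Lemma \ref{Cvet} contradicts the Case~2 hypothesis $\lambda_{2}(\Gamma)<1$; this is exactly what the paper does, obtaining $\lambda_2=1$ in the balanced cases and $\lambda_2\approx 1.629$ or $1.732$ for the two unbalanced signings of $H_2$ (and similarly for $H_3$). One small correction: the proposed test ``$\phi_i(1)\le 0$ together with $\lambda_{1}>1$'' does not imply $\lambda_{2}\ge 1$ (for instance $K_{1,3}$ has $\phi(1)=-2<0$ and $\lambda_1=\sqrt{3}>1$ but $\lambda_2=0$); the correct sufficient condition is $\phi_i(1)\ge 0$ together with $\lambda_1>1$, since $\phi_i(1)=\prod_j(1-\lambda_j)\ge 0$ forces the number of eigenvalues exceeding $1$ to be even or some eigenvalue to equal $1$, hence at least two eigenvalues are $\ge 1$. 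With that sign fixed (or using your direct-computation or Courant--Fischer alternatives), the argument goes through as in the paper.
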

	
	\noindent\emph{Proof of Claim 2.}~If $(H_{i},\sigma_{i})$ is balanced, then $\lambda_{2}((H_{i},+))=1$. By Lemma \ref{Cvet}, this contradicts that $\lambda_{2}(\Gamma)<1$. By Claim \ref{triangle-1}, we know that $(H_{1},\sigma_{1})$ is balanced. So, $\Gamma$ is $(H_{1},\sigma_{1})$-free. If $(H_{2},\sigma_{2})$ is unbalanced, by equivalent switching, $(H_{2},\sigma_{2})$ has only two cases, as shown in Figure \ref{figure2}. By a simple computation, $\lambda_{2}((H_{2},\sigma_{2})) \approx 1.629$ or $1.732$. By Lemma \ref{Cvet}, this contradicts that $\lambda_{2}(\Gamma)< 1$.  So, $\Gamma$ is $(H_{2},\sigma_{2})$-free. Similarly, if $(H_{3},\sigma_{3})$ is unbalanced, then $(H_{3},\sigma_{3})$ has only two cases, as shown in Figure \ref{figure3}, which also contradicts that $\lambda_{2}(\Gamma)< 1$. So, $\Gamma$ is $(H_{3},\sigma_{3})$-free.
	
	\begin{figure}[H]
		\begin{minipage}[c]{0.3\textwidth}
			\hspace{0.5cm}
			\begin{tikzpicture}[scale =1]
				\node (u1) at (0.5,2) {$u_{1}$};
				\node (u2) at (-0.5,1) {$u_{2}$};
				\node (u3) at (-0.5,0) {$u_{3}$};
				\node (u4) at (1.5,0) {$u_{4}$};
				\node (u5) at (1.5,1) {$u_{5}$};
				\node (v)  at (3,2) {$v$};
				\draw [line width=1.5pt](u1) -- (u2) -- (u3) -- (u4) -- (u5) -- (u1);
				\draw [line width=1.5pt](v) -- (u1);
				\node (H_1)  at (0.5,-1) {$H_1$};
			\end{tikzpicture}
		\end{minipage}
		\hspace{0.02\textwidth}
		\begin{minipage}[c]{0.3\textwidth}
			\begin{tikzpicture}[scale =1]
				\node (u1) at (0.5,2) {$u_{1}$};
				\node (u2) at (-0.5,1) {$u_{2}$};
				\node (u3) at (-0.5,0) {$u_{3}$};
				\node (u4) at (1.5,0) {$u_{4}$};
				\node (u5) at (1.5,1) {$u_{5}$};
				\node (v)  at (3,2) {\(v\)};
				\node (w)  at (3,-0.5) {\(w\)};
				\draw [line width=1.5pt](u1) -- (u2) -- (u3) -- (u4) -- (u5) -- (u1);
				\draw [line width=1.5pt](v) -- (u1);
				\draw [line width=1.5pt](v) -- (u4);
				\draw [line width=1.5pt](w) -- (u3);
				\draw [line width=1.5pt](w) -- (u5);
				\node (H_2)  at (1,-1) {$H_2$};
			\end{tikzpicture}
		\end{minipage}
		\hspace{0.02\textwidth}
		\begin{minipage}[c]{0.08\textwidth}
			\begin{tikzpicture}[scale =1]
				\node (u1) at (0.5,2) {$u_{1}$};
				\node (u2) at (-0.5,1) {$u_{2}$};
				\node (u3) at (-0.5,0) {$u_{3}$};
				\node (u4) at (1.5,0) {$u_{4}$};
				\node (u5) at (1.5,1) {$u_{5}$};
				\node (v)  at (2.5,2) {\(v\)};
				\node (w)  at (-1.5,2) {\(w\)};
				\draw [line width=1.5pt](u1) -- (u2) -- (u3) -- (u4) -- (u5) -- (u1);
				\draw [line width=1.5pt](v) -- (u1);
				\draw [line width=1.5pt](v) -- (u4);
				\draw [line width=1.5pt](w) -- (u3);
				\draw [line width=1.5pt](w) -- (u1);
				\node (H_3)  at (0.5,-1) {$H_3$};
			\end{tikzpicture}
		\end{minipage}
		\caption{Three graphs $ H_{1} $, $ H_{2} $ and $ H_{3} $. } 
		\label{figure1} 
\end{figure}
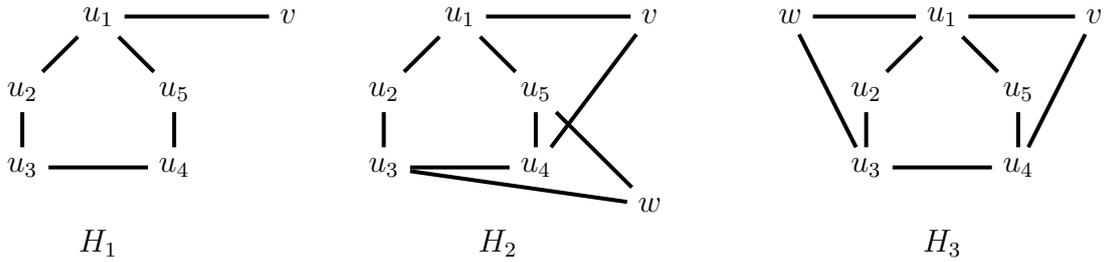

\begin{figure}[H]
	\begin{minipage}[c]{0,6\textwidth}
		\hspace{2.5cm}
		\begin{tikzpicture}[scale =1]
			\node (u1) at (0.5,2) {$u_{1}$};
			\node (u2) at (-0.5,1) {$u_{2}$};
			\node (u3) at (-0.5,0) {$u_{3}$};
			\node (u4) at (1.5,0) {$u_{4}$};
			\node (u5) at (1.5,1) {$u_{5}$};
			\node (v)  at (3,2) {\(v\)};
			\node (w)  at (3,-0.5) {\(w\)};
			\draw [line width=1.5pt][blue](u1) -- (u2) -- (u3) -- (u4) -- (u5) -- (u1);
			\draw [line width=1.5pt][red](v) -- (u1);
			\draw [line width=1.5pt][blue](v) -- (u4);
			\draw [line width=1.5pt][blue](w) -- (u3);
			\draw [line width=1.5pt][blue](w) -- (u5);
		\end{tikzpicture}
	\end{minipage}
	\hfill
	\begin{minipage}[c]{1\textwidth}
		\begin{tikzpicture}[scale =1]
			\node (u1) at (0.5,2) {$u_{1}$};
			\node (u2) at (-0.5,1) {$u_{2}$};
			\node (u3) at (-0.5,0) {$u_{3}$};
			\node (u4) at (1.5,0) {$u_{4}$};
			\node (u5) at (1.5,1) {$u_{5}$};
			\node (v)  at (3,2) {\(v\)};
			\node (w)  at (3,-0.5) {\(w\)};
			\draw [line width=1.5pt][blue](u1) -- (u2) -- (u3) -- (u4) -- (u5) -- (u1);
			\draw [line width=1.5pt][red](v) -- (u1);
			\draw [line width=1.5pt][blue](v) -- (u4);
			\draw [line width=1.5pt][blue](w) -- (u3);
			\draw [line width=1.5pt][red](w) -- (u5);
		\end{tikzpicture}	
		\hspace{2cm}
	\end{minipage}
	\caption{Unbalanced $(H_{2},\sigma_{2})$'s, where blue edges represent positive edges and red edges represent negative edges.} 
	\label{figure2} 
	\end{figure}
	
	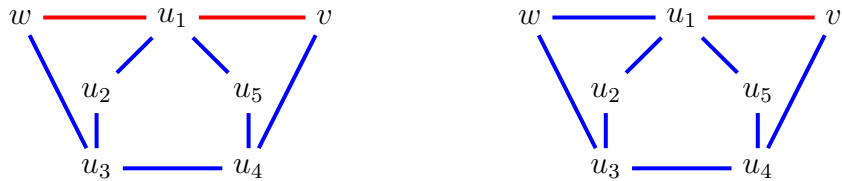
\begin{figure}[H]
\centering
\begin{minipage}[c]{0,6\textwidth}
	\hspace{2.5cm}
	\begin{tikzpicture}[scale =1]
		\node (u1) at (0.5,2) {$u_{1}$};
		\node (u2) at (-0.5,1) {$u_{2}$};
		\node (u3) at (-0.5,0) {$u_{3}$};
		\node (u4) at (1.5,0) {$u_{4}$};
		\node (u5) at (1.5,1) {$u_{5}$};
		\node (v)  at (2.5,2) {\(v\)};
		\node (w)  at (-1.5,2) {\(w\)};
		\draw [line width=1.5pt][blue](u1) -- (u2) -- (u3) -- (u4) -- (u5) -- (u1);
		\draw [line width=1.5pt][red](v) -- (u1);
		\draw [line width=1.5pt][blue](v) -- (u4);
		\draw [line width=1.5pt][blue](w) -- (u3);
		\draw [line width=1.5pt][red](w) -- (u1);
	\end{tikzpicture}
\end{minipage}
\hfill
\begin{minipage}[c]{0.3\textwidth}
	\hspace{-2cm}
	\begin{tikzpicture}[scale =1]
		\node (u1) at (0.5,2) {$u_{1}$};
		\node (u2) at (-0.5,1) {$u_{2}$};
		\node (u3) at (-0.5,0) {$u_{3}$};
		\node (u4) at (1.5,0) {$u_{4}$};
		\node (u5) at (1.5,1) {$u_{5}$};
		\node (v)  at (2.5,2) {\(v\)};
		\node (w)  at (-1.5,2) {\(w\)};
		\draw [line width=1.5pt][blue](u1) -- (u2) -- (u3) -- (u4) -- (u5) -- (u1);
		\draw [line width=1.5pt][red](v) -- (u1);
		\draw [line width=1.5pt][blue](v) -- (u4);
		\draw [line width=1.5pt][blue](w) -- (u3);
		\draw [line width=1.5pt][blue](w) -- (u1);
	\end{tikzpicture}
\end{minipage}
\caption{Unbalanced $(H_{3},\sigma_{3})$'s, where blue edges represent positive edges and red edges represent negative edges. } 
\label{figure3} 
\end{figure}

Define
$$
N(U)=\cup_{v\in U}N_{\Gamma}(v).
$$
Let
$$
T_1=\left \{u_{i}: 1\le i \le 5 \right \}
$$
denote the vertex set of $(C_5,\sigma)$ in $\Gamma$ with  $\Gamma[T_1]=u_{1}u_{2}u_{3}u_{4}u_{5}u_{1}$, and
$$
T_2=N(T_1)\setminus T_1.
$$

\begin{claim}\label{triangle-3}
$d_{T_1}(v)=2$ for each $v \in T_2$.
\end{claim}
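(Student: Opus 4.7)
The claim asserts that every $v \in T_2$ has exactly two neighbors in $T_1$. My plan is to establish the two inequalities $d_{T_1}(v) \le 2$ and $d_{T_1}(v) \ge 2$ separately, drawing on two structural facts already proved earlier in Case~2: that $\Gamma$ is triangle-free and that $\Gamma$ contains no $(H_1, \sigma_1)$ as an induced subgraph (Claim~2).

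For the upper bound $d_{T_1}(v) \le 2$, I will use triangle-freeness. Suppose $v$ had two neighbors $u_i, u_j \in T_1$ that are adjacent in the induced cycle $\Gamma[T_1] = u_1 u_2 u_3 u_4 u_5 u_1$. Then $v u_i u_j$ would form a triangle, contradicting the fact that $\Gamma$ is triangle-free. Hence $N_{T_1}(v)$ is an independent set of $\Gamma[T_1] = C_5$, and since the independence number of $C_5$ equals $2$, we obtain $d_{T_1}(v) \le 2$.

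For the lower bound $d_{T_1}(v) \ge 2$, the definition $T_2 = N(T_1) \setminus T_1$ immediately yields $d_{T_1}(v) \ge 1$. Assume toward a contradiction that $d_{T_1}(v) = 1$; without loss of generality, say $v$ is adjacent only to $u_1$ among the vertices of $T_1$. Since $v$ has no other neighbor inside $T_1$, the set $T_1 \cup \{v\}$ induces in $\Gamma$ a signed graph whose underlying graph is exactly $H_1$ (a $C_5$ together with a pendant edge attached at one vertex). This contradicts Claim~2, which says $\Gamma$ is $(H_1, \sigma_1)$-free for every signing $\sigma_1$.

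Combining the two bounds gives $d_{T_1}(v) = 2$, as required. The argument is essentially immediate once the structural preparation of Case~2 is in place; I do not anticipate a substantive obstacle, and the only thing to verify carefully is that $\Gamma[T_1 \cup \{v\}]$ is genuinely an \emph{induced} subgraph in the $d_{T_1}(v) = 1$ subcase, which is clear because $v$ contributes no additional edges to $T_1$.
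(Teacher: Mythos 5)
Your proof is correct and follows essentially the same route as the paper: triangle-freeness forces $N_{T_1}(v)$ to be independent in the induced $C_5$ (hence $d_{T_1}(v)\le 2$), and the case $d_{T_1}(v)=1$ is excluded because $T_1\cup\{v\}$ would induce a copy of $(H_1,\sigma_1)$, contradicting Claim~2. Your version merely spells out the independence-number step a little more explicitly than the paper does.
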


\noindent\emph{Proof of Claim 3.}~For $v\in T_2$, without loss of generality, assume that $v\in N_\Gamma(u_{1})$. If $d_{T_1}(v)\ge 3$, then $\Gamma$ contains a triangle, a contradiction. If $d_{T_1}(v)=1$, then $N_{T_1}(v)=u_{1}$. So, $\left \{v,u_{1},u_{2},u_{3},u_{4},u_{5}  \right \}$ induces $(H_{1},\sigma_{1})$, a contradiction to Claim \ref{triangle-2}. Thus, $d_{T_1}(v)=2$.

\begin{claim}\label{triangle-4}
$V(\Gamma)=T_1\cup T_2$.
\end{claim}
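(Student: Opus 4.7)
The plan is to derive a contradiction by exhibiting a forbidden induced $H_1$ inside $\Gamma$. Suppose for contradiction that $V(\Gamma)\setminus(T_1\cup T_2)$ is nonempty. Because $\Gamma$ is connected and $T_1\cup T_2$ is nonempty, there must be an edge from $T_1\cup T_2$ to its complement. Such an edge cannot have an endpoint in $T_1$ (every neighbor of a vertex in $T_1$ lies in $T_1\cup T_2$ by definition of $T_2$), so it has the form $vw$ with $v\in T_2$ and $w\in V(\Gamma)\setminus(T_1\cup T_2)$. Concretely, I would take any vertex not in $T_1\cup T_2$ and a shortest path from it to $T_1$; the last two interior vertices of this path supply the desired pair $(w,v)$.

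Next, I would use Claim~3 to conclude that $v$ has exactly two neighbors in $T_1$. Since $\Gamma$ is triangle-free (already established in Case~2), these two neighbors are not adjacent in $C_5$, hence are at distance~$2$ on the pentagon. By the rotational symmetry of $\Gamma[T_1]$, one may assume $N_{T_1}(v)=\{u_1,u_3\}$.

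I would then inspect the induced subgraph $\Gamma[S]$ on $S=\{w,v,u_1,u_5,u_4,u_3\}$. The edges inside $S$ that are forced to exist are $u_1u_5$, $u_5u_4$, $u_4u_3$ (from the pentagon), together with $vu_1$, $vu_3$, and $vw$. The key verification is that no other edges occur in $\Gamma[S]$: the chords $u_1u_4$, $u_1u_3$, $u_3u_5$ are excluded because the $C_5$ on $T_1$ is induced (any chord would create a triangle, contradicting triangle-freeness); the edges $vu_4$ and $vu_5$ are excluded because $d_{T_1}(v)=2$; and the edges $wu_i$ for $i\in\{1,3,4,5\}$ are excluded because $w\notin T_2$, so $w$ has no neighbor in $T_1$. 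Consequently $\Gamma[S]$ is a $5$-cycle $v\,u_1\,u_5\,u_4\,u_3\,v$ with a pendant $w$ at $v$, which is precisely $H_1$, contradicting Claim~2.

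The main delicate point is ensuring that the candidate vertex $w$ really has no neighbor in $T_1$ and that the pentagon is induced; both follow from earlier structural facts (definition of $T_2$, triangle-freeness, and $s=5$), so there is no heavy computation—the argument is essentially a forbidden-subgraph check once $(v,w)$ are correctly chosen.
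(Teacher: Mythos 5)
Your proof is correct and follows essentially the same route as the paper: locate a vertex $w$ outside $T_1\cup T_2$ adjacent to some $v\in T_2$, use Claim~3 and triangle-freeness to pin down $N_{T_1}(v)$ as two vertices at distance two on the pentagon, and exhibit the induced $H_1$ on $\{w,v,u_1,u_5,u_4,u_3\}$, contradicting Claim~2. Your explicit connectivity/shortest-path argument for the existence of the edge $vw$, and your itemized check that no extra edges occur in $\Gamma[S]$, are slightly more careful than the paper's terse ``without loss of generality'' but amount to the same argument.
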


\noindent\emph{Proof of Claim 4.}~By contradiction, let $v^{\prime}\in V(\Gamma)\setminus(T_1\cup T_2)\not=\emptyset$. Then $v^{\prime}u_{i}\notin E(\Gamma)$ for $1\le i \le 5$. Without loss of generality, assume that $v\in T_2$ and $u_1v, v^{\prime}v\in E(\Gamma)$. Recall that $d_{T_1}(v)=2$ and $\Gamma$ is triangle-free. Without loss of generality, assume that $N_{T_1}(v)=\left \{u_{1},u_{3}\right \}$. So, $\left \{v^{\prime},v,u_{3},u_{4},u_{5},u_{1} \right \}$ induces $(H_{1},\sigma_{1})$, a contradiction. Therefore, $V(\Gamma)=T_1\cup T_2$.

\begin{claim}\label{triangle-5}
$n\ge 7$.
\end{claim}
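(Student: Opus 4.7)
The plan is to rule out $n=5$ and $n=6$ directly, using the structural information accumulated in Claims~\ref{triangle-1}--\ref{triangle-4} together with the standing hypothesis of Case~2 that $\Gamma$ is unbalanced.

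First I would dispose of $n=5$. Since $V(\Gamma)=T_1\cup T_2$ and $|T_1|=5$, this forces $T_2=\emptyset$ and so $\Gamma=(C_5,\sigma)$. But by Claim~\ref{triangle-1}, $\Gamma$ has no unbalanced $C_5$, so such a $\Gamma$ would have to be balanced, contradicting the working assumption in Case~2.

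Second, for $n=6$ I would show that $\Gamma$ must already be balanced, which again contradicts the Case~2 hypothesis. Write $T_2=\{v\}$; by Claim~\ref{triangle-3}, $d_{T_1}(v)=2$. Since $\Gamma$ is triangle-free, the two neighbors of $v$ in $T_1$ cannot be adjacent on the 5-cycle $u_1u_2u_3u_4u_5u_1$, so they are at distance two along it. Up to symmetry take $N_{T_1}(v)=\{u_1,u_3\}$. Then the subgraph induced on $\{u_1,u_2,u_3,u_4,u_5\}$ is the signed $C_5$ already present, and the subgraph induced on $\{v,u_1,u_5,u_4,u_3\}$ is the signed 5-cycle $vu_1u_5u_4u_3v$ — the latter is indeed an induced $C_5$ because $\Gamma[T_1]$ is a pure 5-cycle with no chords and $v$ has exactly two neighbors in $T_1$. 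By Claim~\ref{triangle-1}, both of these induced $C_5$'s are balanced.

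The cycle space of $\Gamma$ has dimension $|E(\Gamma)|-|V(\Gamma)|+1=7-6+1=2$, and the two balanced 5-cycles above are linearly independent in it (their symmetric difference is the 4-cycle $vu_1u_2u_3v$), so they form a basis. Since a signed graph is balanced iff every element of a cycle-space basis is positive, $\Gamma$ itself is balanced, contradicting the assumption. Hence $n\ge 7$.

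The only real subtlety is verifying that $vu_1u_5u_4u_3v$ is an \emph{induced} 5-cycle so that Claim~\ref{triangle-1} applies; everything else is a direct combination of the claims already established.
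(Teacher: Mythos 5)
Your proof is correct, and for the $n=6$ case it takes a genuinely different route from the paper. The paper first notes (via Claims~\ref{triangle-3} and \ref{triangle-4}) that $m=7$, then uses Claim~\ref{triangle-1} only on the original $5$-cycle $\Gamma[T_1]$ to reduce to two switching classes $\Gamma_1,\Gamma_2$; it kills $\Gamma_1$ because it is balanced and kills $\Gamma_2$ by computing $\lambda_1(\Gamma_2)\approx 2.391<\sqrt{6}=\sqrt{m-1}$. You instead apply Claim~\ref{triangle-1} to \emph{both} induced $5$-cycles (the second one is indeed induced --- in fact any $C_5$ in a triangle-free graph is chordless, so the explicit check you flag as the ``only subtlety'' comes for free) and then invoke the standard fact that the cycle sign map is a $\mathrm{GF}(2)$-linear character on the cycle space, so positivity on a basis forces balance of $\Gamma$, contradicting the standing hypothesis of Case~2. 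This is purely combinatorial, avoids the numerical eigenvalue computation entirely, and in fact shows that the paper's class $\Gamma_2$ is already excluded by Claim~\ref{triangle-1} (its second $5$-cycle $vu_1u_2u_3u_4v$ is an induced unbalanced $C_5$), so the computation $\lambda_1(\Gamma_2)\approx 2.391$ is redundant. The only mild caveat is that the balance-via-cycle-basis criterion is used without citation; it is a classical result of Zaslavsky and would merit a one-line justification (the sign of a symmetric difference of two cycles is the product of their signs), but the argument is sound. The $n=5$ case is handled exactly as in the paper.
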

\noindent\emph{Proof of Claim 5.}~If $n=5$, by Claim \ref{triangle-1}, $\Gamma \sim (C_{5},+)$, which contradicts that $\Gamma$ is unbalanced. If $n=6$, by Claims \ref{triangle-3} and \ref{triangle-4}, we have $m=7$. Without loss of generality, assume that $v\in T_2$ and $N_{T_1}(v)=\left \{u_{1},u_{4} \right \}$. By Claim \ref{triangle-1}, there are only two cases for $\Gamma$, as shown in Figure \ref{figure4}. Note that $\Gamma_1$ is balanced, a contradiction to that $\Gamma$ is unbalanced. For $\Gamma_2$, by a simple computation, we have $\lambda_{1}(\Gamma_2)\approx 2.391$. Clearly, $\lambda_{1}(\Gamma_2)< \sqrt{m-1}=\sqrt{6}$, a contradiction. Therefore, $n\ge 7$.

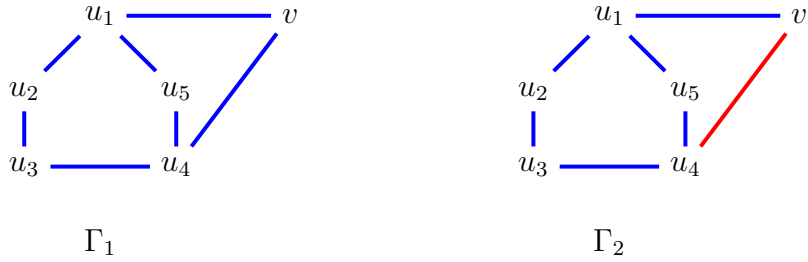
\begin{figure}[H]
\begin{minipage}[c]{0,6\textwidth}
\hspace{2.5cm}
\begin{tikzpicture}[scale =1]
	\node (u1) at (0.5,2) {$u_{1}$};
	\node (u2) at (-0.5,1) {$u_{2}$};
	\node (u3) at (-0.5,0) {$u_{3}$};
	\node (u4) at (1.5,0) {$u_{4}$};
	\node (u5) at (1.5,1) {$u_{5}$};
	\node (v)  at (3,2) {$v$};
	\draw [line width=1.5pt][blue](u1) -- (u2) -- (u3) -- (u4) -- (u5) -- (u1);
	\draw [line width=1.5pt][blue](v) -- (u1);
	\draw [line width=1.5pt][blue](v) -- (u4);
	\node (Gamma_1)  at (0.5,-1) {$\Gamma_1$};
\end{tikzpicture}
\end{minipage}
\hfill
\begin{minipage}[c]{0.3\textwidth}
\hspace{-2cm}
\begin{tikzpicture}[scale =1]
	\node (u1) at (0.5,2) {$u_{1}$};
	\node (u2) at (-0.5,1) {$u_{2}$};
	\node (u3) at (-0.5,0) {$u_{3}$};
	\node (u4) at (1.5,0) {$u_{4}$};
	\node (u5) at (1.5,1) {$u_{5}$};
	\node (v)  at (3,2) {$v$};
	\draw [line width=1.5pt][blue](u1) -- (u2) -- (u3) -- (u4) -- (u5) -- (u1);
	\draw [line width=1.5pt][blue](v) -- (u1);
	\draw [line width=1.5pt][red](v) -- (u4);
	\node (Gamma_2)  at (0.5,-1) {$\Gamma_2$};
\end{tikzpicture}
\end{minipage}
\caption{$\Gamma$'s with $n=6$ and $m=7$, where blue edges represent positive edges and red edges represent negative edges.} 
\label{figure4} 
\end{figure}

Let $v \in T_2$ and $N_{T_1}(v)=\left \{u_{1},u_{4} \right \}$. Recall that $\Gamma$ is triangle-free. For any $w\in T_2\setminus\{v\}$, if $N_{T_1}(w)=N_{T_1}(v)$, then $wv\notin E(\Gamma)$; if $N_{T_1}(w)\ne N_{T_1}(v)$,  then $N_{T_1}(w)\cap N_{T_1}(v)=\emptyset$, since $\Gamma$ is $(H_{3},\sigma_{3})$-free. Define
$$A=\{w\in T_2  \mid N_{T_1}(w)=N_{T_1}(v)\}$$
and
$$B=\{w\in T_2 \mid N_{T_1}(w)\cap N_{T_1}(v)=\emptyset\}.$$
Recall that $\Gamma$ is $(H_{2},\sigma_{2})$-free.  If $w\in B$, then $wv\in E(\Gamma)$. Recall that $\Gamma$ is $(H_{3},\sigma_{3})$-free. Then $\cup_{w\in B}N_{T_1}(w)=\left \{u_{2},u_{5}\right \}$ or $\cup_{w\in B}N_{T_1}(w)=\left \{u_{3},u_{5}\right \}$.



Without loss of generality,  assume that $\cup_{w\in B}N_{T_1}(w)=\left \{u_{2},u_{5}\right \}$. Let $C=N_{\Gamma}(u_{1})\cap N_{\Gamma}(u_{4})$ and $D=N_{\Gamma}(u_{2})\cap N_{\Gamma}(u_{5})$.
Clearly, $C$ and $D$ are both independent sets and $C\cup D=T_2\cup \left \{u_{1},u_{5}\right \}$. Therefore, $\Gamma[C\cup D]$ is signed complete bipartite subgraph. Let $|C|=c$ and $|D	|=d$. Then $m=(c+1)(d+1)+1$. By Claim \ref{triangle-1}, there exists a signed graph $\Gamma^{''}=(G,\sigma^{''})$ such that $\Gamma^{''} \sim \Gamma$ and $\Gamma^{''}[T_1]=(C_{5},+)$. Let $\sigma_{K}$ be a sign function such that $(SK_{c+1,d+1},\sigma_K) \sim \Gamma^{''}$. By Lemmas \ref{Yao} and \ref{Bo4}, we have
\begin{align*}
\lambda_{1}(\Gamma)=\lambda_{1}((SK_{c+1,d+1},\sigma_{K}))
&\le\lambda_{1}((SK_{c+1,d+1},+))\\
&<\lambda_{1}((K_{c+1,d+1},+))\\
&=\sqrt{(c+1)(d+1)}\\
&=\sqrt{m-1},
\end{align*}
a contradiction.

This completes the proof.
\qed
\end{Tproof}

\begin{remark}
{\em Let $\Gamma=(K_{4},\sigma)$ be a complete graph of order $4$ having exactly one negative edge. By a simple calculation, $\lambda_{1}(\Gamma)=|\lambda_{4}(\Gamma)|=\sqrt{5}=\sqrt{|E(\Gamma)|-1}$, which satisfies the conditions of Theorem \ref{AD2}. Since $\Gamma$ is unbalanced, Theorem \ref{AD2} is a signed version of \cite[Theorem~1.3]{Bo}.
}
\end{remark}

\section{Proof of Conjecture \ref{ConjecHou-1}}\label{Sec-Conj-Hou-1}


For a square matrix $M$, we define $\det(M)$ as its determinant. A zero matrix is the matrix which all elements are zero.

\begin{lemma}\emph{(See \cite{Hou})}\label{Hou}
Let $\Gamma$ be a connected signed graph. Then $\det(L(\Gamma))=0$ if and only if $\Gamma$ is balanced.
\end{lemma}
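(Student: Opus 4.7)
The plan is to exploit the incidence-matrix factorization of the signed Laplacian. First I would define a signed vertex--edge incidence matrix $B(\Gamma)$ of size $n \times m$: pick an arbitrary orientation of each edge $e=uv$ (say $u \to v$), and set $B_{ue}=1$ and $B_{ve}=-\sigma(e)$, with all other entries zero. A direct row-by-row computation then shows $L(\Gamma)=B(\Gamma)B(\Gamma)^{T}$. In particular $L(\Gamma)$ is positive semidefinite and
\[
\det(L(\Gamma))=0\iff \ker L(\Gamma)\neq\{0\}\iff \ker B(\Gamma)^{T}\neq\{0\}.
\]
Moreover, for any $x=(x_{1},\dots,x_{n})^{T}\in\mathbb{R}^{n}$ and any edge $e=uv$ (oriented $u\to v$), one reads off $(B(\Gamma)^{T}x)_{e}=x_{u}-\sigma(e)x_{v}$. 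Thus the kernel condition $B(\Gamma)^{T}x=0$ is equivalent to
\[
x_{u}=\sigma(uv)\,x_{v}\quad\text{for every edge } uv\in E(\Gamma).
\]

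For the ``if'' direction, suppose $\Gamma$ is balanced. Then there exists a switching function $\theta:V(\Gamma)\to\{+1,-1\}$ such that $\Gamma^{\theta}$ has all positive edges, which means $\sigma(uv)=\theta(u)\theta(v)$ for every edge $uv$. Setting $x_{u}=\theta(u)$ gives a nonzero vector satisfying $x_{u}=\sigma(uv)x_{v}$, hence $B(\Gamma)^{T}x=0$ and so $\det(L(\Gamma))=0$.

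For the ``only if'' direction, suppose $\det(L(\Gamma))=0$, and pick a nonzero $x$ with $B(\Gamma)^{T}x=0$. Since $|\sigma(uv)|=1$, the relation $x_{u}=\sigma(uv)x_{v}$ forces $|x_{u}|=|x_{v}|$ along every edge; because $\Gamma$ is connected, there is a constant $c\neq 0$ (else $x$ is zero) with $|x_{u}|=c$ for all $u\in V(\Gamma)$. Define $\theta(u)=x_{u}/c\in\{+1,-1\}$. Then $\theta(u)=\sigma(uv)\theta(v)$, i.e.\ $\sigma(uv)=\theta(u)\theta(v)$, so $\Gamma^{\theta}$ is all-positive. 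Hence every cycle of $\Gamma^{\theta}$ is balanced, and since balance is preserved under switching, $\Gamma$ itself is balanced.

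The argument is essentially routine once the incidence factorization is in hand; I expect the only care needed is in checking the signs in $L(\Gamma)=B(\Gamma)B(\Gamma)^{T}$ (the $\sigma(e)^{2}=1$ cancellation is what makes the orientation choice irrelevant), and in using connectedness to propagate $|x_{u}|=c$ globally. No deep tool beyond these linear-algebraic manipulations and the definition of switching equivalence is required.
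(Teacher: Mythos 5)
Your proof is correct: the factorization $L(\Gamma)=B(\Gamma)B(\Gamma)^{T}$, the identification of $\ker L(\Gamma)$ with vectors satisfying $x_{u}=\sigma(uv)x_{v}$ along edges, and the connectedness argument forcing $|x_{u}|$ constant are all sound, and this is exactly the standard incidence-matrix argument from the cited source (the paper itself states this lemma without proof, citing \cite{Hou}, and your matrix $B$ coincides with the incidence matrix $P$ the paper defines in Section 5). No gaps worth flagging beyond the implicit use of Harary's characterization of balance via switching to an all-positive signature, which is standard.
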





\medskip
\begin{Tproof}\textbf{~of~Conjecture~\ref{ConjecHou-1}.}
Let $1\le k\le n-1$. Partition $L(\Gamma)$ as
$$
\begin{pmatrix}
B	& C\\
C^{T}	&D
\end{pmatrix},
$$
where $B$ is $k$-by-$k$. Consider the following two cases.

\noindent\emph{Case~1.}~If $B$ is not a positive definite matrix, then $B$ has a zero eigenvalue, since $B$ is a positive semi-definite matrix. We obtain $\det(L(\Gamma))=0$. By Lemma \ref{Hou}, we have $\Gamma$ is balanced. By Theorem \ref{Grone}, we obtain Conjecture \ref{ConjecHou-1} is true.

\noindent\emph{Case~2.}~If $B$ is a positive definite matrix, then $B$ is invertible.  Since $G$ is connected, $C$ is not zero matrix. Note that $L(\Gamma)=FF^{T}$, where
$$
F=\begin{pmatrix}
B^{\frac{1}{2} }	& 0\\
C^{T}B^{-\frac{1}{2}}	&H
\end{pmatrix}
$$
and $H=(D-C^{T}B^{-1}C)^{\frac{1}{2}}$. Note also that $L(\Gamma)$ and
$$
K=F^{T}F=\begin{pmatrix}
B+B^{-\frac{1}{2}}CC^{T}B^{-\frac{1}{2}}	& *\\
*	&*
\end{pmatrix}
$$
have the same eigenvalues.	Since $B$ is a positive definite matrix, there exists an orthogonal matrix $Q$ such that $B=Q^{T}\diag(a_{1},a_{2},\dots ,a_{n})Q$, where $a_{i}>0,~1\le i \le n$.

Notice that $QCC^{T}Q^{T}=(QC)(QC)^{T}$ is a positive semi-definite matrix. We assume that the non-negative diagonal elements are $m_{1}, m_{2}, \dots , m_{k}$. If there exists an $i$ such that $m_{i}=0,~i\in \{1,2,\dots,k\}$, then all the elements in the row and column corresponding to $m_i$ are zero \cite[P201]{Zhang}. So, if $m_{1}=m_{2}=\dots =m_{k}=0$, then $(QC)(QC)^{T}$ is zero matrix. Recall that $(QC)(QC)^{T}$ is a positive semi-definite matrix. Then $QC$ is zero matrix. Recall that $Q$ is an orthogonal matrix. Then $C$ is zero matrix, a contradiction. Hence, there exists a $m_{j}>0$ for a certain $j$. So,
\begin{align*}
\trace\left(B^{-\frac{1}{2}}CC^{T}B^{-\frac{1}{2}}\right) &=\trace\left(B^{-1}CC^{T}\right) \\
&=\trace\left(Q^{T}\diag\left(\frac{1}{a_{1}},\frac{1}{a_{2}},\dots, \frac{1}{a_{k}}\right)QCC^{T}\right)\\
&=\trace\left(\diag\left(\frac{1}{a_{1}},\frac{1}{a_{2}},\dots, \frac{1}{a_{k}}\right)QCC^{T}Q^{T}\right)\\
&=\sum_{i=1}^{k}\frac{m_{i}}{a_{i}}>0.
\end{align*}
Now,
$$
\sum_{i=1}^{k}\mu_{i}(\Gamma)=\trace\left(B+B^{-\frac{1}{2}}CC^{T}B^{-\frac{1}{2}}\right)>\trace(B) =\sum_{i=1}^{k}d_{i}(\Gamma).
$$
This completes the proof.
\qed\end{Tproof}

\section{Proof~of~Theorem~\ref{LP1}}\label{Sec-Conj-Hou-22}


Let $\Gamma=(G,\sigma)$ be a signed graph. For each edge $e_{k}=(v_{i},v_{j})$ of $\Gamma$, choose one of $v_{i}$ or $v_{j}$ to be the head of $e_{k}$ and the other to be the tail. The vertex-edge \emph{incidence matrix} $P=P(\Gamma)$ afforded by a fixed orientation of $\Gamma$ is the $n$-by-$m$ matrix $P=(c_{ij})$ given by
$$
c_{ij} =
\begin{cases}
+1, & \text{if } v_i \text{ is the head of } e_j; \\
-1, & \text{if } v_i \text{ is the tail of } e_j, \text{ and } \sigma(e_j) = +1; \\
+1, & \text{if } v_i \text{ is the tail end of } e_j, \text{ and } \sigma(e_j) = -1; \\
0,  & \text{otherwise}.
\end{cases}
$$
It is easy to verify that $PP^{T}$ is always the Laplacian matrix $L(\Gamma)$. In any event, $P^{T}P$ and $PP^{T}$ share the same nonzero eigenvalues\cite{Ping}.

\begin{lemma}\emph{(See \cite{Cvt})}\label{Cvt}
Let $M$ be a positive semi-definite matrix with eigenvalues $\lambda_1 \geq \lambda_2 \geq \cdots \geq \lambda_n$. Then, for $r = 1, 2, \ldots, n$,
$$
\lambda_1 + \lambda_2 + \cdots + \lambda_r = \max \left\{ \mathbf{u}_1^T M \mathbf{u}_1 + \mathbf{u}_2^T M \mathbf{u}_2 + \cdots + \mathbf{u}_r^T M \mathbf{u}_r \right\},
$$
where the maximum is taken over all orthonormal vectors $\mathbf{u}_1, \mathbf{u}_2, \ldots, \mathbf{u}_r$. In particular, $\lambda_1 + \lambda_2 + \cdots + \lambda_r$ is bounded below by the sum of the $r$ largest diagonal entries of $M$.
\end{lemma}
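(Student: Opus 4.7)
The plan is to prove Ky Fan's maximum principle via the spectral decomposition of $M$, and then deduce the diagonal consequence by selecting standard basis vectors. First I would invoke the spectral theorem: since $M$ is positive semi-definite (in particular symmetric), there exists an orthonormal basis $v_{1},v_{2},\ldots,v_{n}$ of $\mathbb{R}^{n}$ with $Mv_{i}=\lambda_{i}v_{i}$. Given any orthonormal system $\mathbf{u}_{1},\ldots,\mathbf{u}_{r}$, I expand each $\mathbf{u}_{j}=\sum_{i=1}^{n}c_{ij}v_{i}$, so that $\mathbf{u}_{j}^{T}M\mathbf{u}_{j}=\sum_{i=1}^{n}\lambda_{i}c_{ij}^{2}$. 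Summing over $j$ gives
$$
\sum_{j=1}^{r}\mathbf{u}_{j}^{T}M\mathbf{u}_{j}=\sum_{i=1}^{n}\lambda_{i}s_{i},\qquad s_{i}:=\sum_{j=1}^{r}c_{ij}^{2}.
$$

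The next step is to analyze the weights $s_{i}$. Because each $\mathbf{u}_{j}$ is a unit vector, $\sum_{i=1}^{n}s_{i}=\sum_{j=1}^{r}\|\mathbf{u}_{j}\|^{2}=r$. Because $\mathbf{u}_{1},\ldots,\mathbf{u}_{r}$ are orthonormal, $s_{i}$ equals the squared norm of the projection of $v_{i}$ onto $\mathrm{span}\{\mathbf{u}_{1},\ldots,\mathbf{u}_{r}\}$, and therefore $0\le s_{i}\le 1$. Subject to these constraints, $\sum_{i=1}^{n}\lambda_{i}s_{i}$ is a linear functional on a polytope, and since $\lambda_{1}\ge\cdots\ge\lambda_{n}$ a standard rearrangement argument shows the maximum is attained (at a vertex) by the choice $s_{1}=\cdots=s_{r}=1$, $s_{r+1}=\cdots=s_{n}=0$, which yields the upper bound $\sum_{j=1}^{r}\lambda_{j}$. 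Equality is achieved by taking $\mathbf{u}_{j}=v_{j}$ for $j=1,\ldots,r$, which proves the stated max identity.

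Finally, for the ``in particular'' clause, I would apply the identity with a specific choice of orthonormal vectors. Let the $r$ largest diagonal entries of $M$ be $M_{i_{1}i_{1}},M_{i_{2}i_{2}},\ldots,M_{i_{r}i_{r}}$, and let $\mathbf{e}_{i_{1}},\ldots,\mathbf{e}_{i_{r}}$ be the corresponding standard basis vectors. They are orthonormal, and $\mathbf{e}_{i_{j}}^{T}M\mathbf{e}_{i_{j}}=M_{i_{j}i_{j}}$. Substituting them into the maximum characterization yields
$$
\sum_{j=1}^{r}\lambda_{j}\;\ge\;\sum_{j=1}^{r}\mathbf{e}_{i_{j}}^{T}M\mathbf{e}_{i_{j}}\;=\;\sum_{j=1}^{r}M_{i_{j}i_{j}},
$$
the desired diagonal lower bound.

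I do not anticipate a serious obstacle here; the only subtlety is justifying that the maximum of the linear functional $\sum_{i}\lambda_{i}s_{i}$ over the feasible set $\{s\in[0,1]^{n}:\sum_{i}s_{i}=r\}$ is attained at $s_{1}=\cdots=s_{r}=1$. This can be shown either by noting that vertices of this polytope are $0/1$ vectors with exactly $r$ ones (so the optimum picks the $r$ indices with largest $\lambda_{i}$), or by a direct swap argument: if $s_{i}<1$ for some $i\le r$ while $s_{j}>0$ for some $j>r$, then transferring weight $\min\{1-s_{i},s_{j}\}$ from $s_{j}$ to $s_{i}$ strictly increases the objective (or leaves it unchanged when $\lambda_{i}=\lambda_{j}$), without violating the constraints.
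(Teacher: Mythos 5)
Your proof is correct and complete: it is the standard proof of Ky Fan's maximum principle via the spectral decomposition, the constraints $0\le s_i\le 1$ and $\sum_i s_i=r$ are justified properly (Bessel's inequality for the upper bound on $s_i$), and the reduction of the ``in particular'' clause to the choice of standard basis vectors is exactly right. The paper itself gives no proof of this lemma --- it is quoted as a known result from the reference \cite{Cvt} --- so there is nothing to compare against; your argument is the one found in the standard texts (and, as a minor remark, it nowhere uses positive semi-definiteness, only symmetry of $M$).
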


\begin{lemma}\emph{(See \cite{Wang})}
\label{Wang}
Let $\Gamma$ be a signed graph with n vertices and let $\Gamma'$ be a signed graph obtained from $\Gamma$ by inserting a new edge into $\Gamma$. Then the Laplacian eigenvalues of $\Gamma$ and $\Gamma'$ interlace, that is,
$$
\mu_{1}(\Gamma')\ge \mu_{1}(\Gamma) \ge \mu_{2}(\Gamma')\ge \mu_{2}(\Gamma)\ge \dots \ge \mu_{n}(\Gamma')\ge \mu_{n}(\Gamma).
$$	
\end{lemma}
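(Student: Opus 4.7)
The plan is to reduce the lemma to the classical rank-one perturbation interlacing theorem for real symmetric matrices by exploiting the incidence matrix factorization $L(\Gamma)=PP^{T}$ recalled just before the statement.

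First I would fix an orientation of $\Gamma$ and extend it arbitrarily to the new edge $e$, obtaining an incidence matrix $P'$ for $\Gamma'$ which is simply $P$ with one extra column $p_{e}$ appended. From the definition of the signed incidence matrix, this column has the explicit form $p_{e}=\mathbf{e}_{u}-\mathbf{e}_{v}$ when $\sigma(e)=+1$ and $p_{e}=\mathbf{e}_{u}+\mathbf{e}_{v}$ (up to sign) when $\sigma(e)=-1$, where $\mathbf{e}_{u},\mathbf{e}_{v}$ are standard basis vectors corresponding to the endpoints of $e$. Writing out block-column products gives
\[
L(\Gamma')=P'(P')^{T}=PP^{T}+p_{e}p_{e}^{T}=L(\Gamma)+p_{e}p_{e}^{T}.
\]
In both the positive and negative edge cases, $p_{e}p_{e}^{T}$ is a real symmetric positive semidefinite matrix of rank exactly one.

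Next I would invoke the classical Weyl (Cauchy) interlacing theorem for rank-one PSD perturbations: if $A$ and $A'=A+xx^{T}$ are real symmetric $n\times n$ matrices with $xx^{T}$ positive semidefinite of rank one, then
\[
\lambda_{1}(A')\ge \lambda_{1}(A)\ge \lambda_{2}(A')\ge \lambda_{2}(A)\ge \cdots \ge \lambda_{n}(A')\ge \lambda_{n}(A).
\]
Applying this with $A=L(\Gamma)$, $A'=L(\Gamma')$ and $x=p_{e}$ yields exactly the desired interlacing of Laplacian eigenvalues.

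There is no substantive obstacle; the only point requiring care is verifying that the perturbation is truly rank-one PSD in the negative-edge case, which is immediate from the definition of $P$ given in the paper (a negative edge produces a column with two nonzero entries of the same sign, yielding the outer product $(\mathbf{e}_{u}+\mathbf{e}_{v})(\mathbf{e}_{u}+\mathbf{e}_{v})^{T}\succeq 0$). Once this is in place, the statement follows directly from the standard interlacing theorem, without any further signed-graph specific argument.
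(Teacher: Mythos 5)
Your argument is correct: the appended incidence column for the new edge is $\mathbf{e}_u-\mathbf{e}_v$ or $\mathbf{e}_u+\mathbf{e}_v$ according to the sign, so $L(\Gamma')=L(\Gamma)+p_ep_e^{T}$ is a rank-one positive semidefinite perturbation and Weyl's inequalities give exactly the stated interlacing. The paper does not prove this lemma but cites it from the reference [Wang]; your proof is the standard argument used there, so there is nothing to compare beyond noting that you have correctly reconstructed it.
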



In the following, we give the proof of Theorem \ref{LP1}. We would like to mention that the proof of Theorem \ref{LP1} is similar to that of \cite[Theorem~2]{GM-SIAM-1994}. For the sake of completeness, we keep the proof of Theorem \ref{LP1}.


\medskip
\begin{Tproof}\textbf{~of~Theorem~\ref{LP1}.}
By the previous definition, $L(\Gamma)=PP^{T}$, regardless of the orientation. The matrix $K(\Gamma)=P^{T}P$ depends on the orientation. In any event, $K(\Gamma)$ and $L(\Gamma)$ share the same nonzero eigenvalues.

Suppose that $\mathbf{x}$ is a real $m$-dimensional column vector. If $e\in E(\Gamma)$ and $v\in V(\Gamma)$, define $s(v,e)=+1$ if $v$ is the head of $e$, $s(v,e)=-1$ if $v$ is the tail of $e$ and $\sigma(e)=+1$, $s(v,e)=+1$ if $v$ is the tail of $e$ and $\sigma(e)=-1$, and $0$ otherwise. It is convenient to think of its components as indexed by $E(\Gamma)$, so the "$v$-th component" of $P\mathbf{x}$ is
$$\sum_{e\in  E(\Gamma)}s(v,e)\mathbf{x}_{e}$$
where $\mathbf{x}_{e}$ denotes the coordinate of $\mathbf{x}$ corresponding to the edge $e$. Therefore,
\begin{equation}\label{3}
\mathbf{x}^{T}K(\Gamma)\mathbf{x}=\sum_{v\in V(\Gamma)}\left(\sum_{e\in E(\Gamma)}s(v,e)\mathbf{x}_{e}\right)^2.
\end{equation}

Choose an orientation of $E(\Gamma)$ such that $u_{i}$ is the head of each of the $d_{\Gamma}(u_{i})$ edges incident with it,  $1\le i\le k-r$. In addition, we may prescribe that $u_{i}$ is the head of each of the $d_{\Gamma}(u_{i})-1>0$ edges, other than $e_{k-i+1}$, incident with it, $k-r<i\le k$.

For each $u\in R$, define a real $m$-tuple $\mathbf{x}(u)$ as follows: $\mathbf{x}(u)_{e}$, the coordinate of $\mathbf{x}(u)$ corresponding to the edge $e\in E(\Gamma)$, is 1 if $u$ is the head of $e$, and 0 otherwise. Then $\left \{\mathbf{x}(u): u\in R  \right \} $ is an orthogonal set of vectors. Moreover, $||\mathbf{x}(u)_{i}||^2$ = $d_{\Gamma}(u_{i})$ if $i\le k-r$ and $d_{\Gamma}(u_{i})-1$ if $i>k-r$. Let $\mathbf {y}(u)=\frac{\mathbf{x}(u)}{||\mathbf{x}(u)||} $ for $u\in R$.

Then, for $1\le i\le k-r$,
$$
\sum_{e\in E(\Gamma)}s(v,e)\mathbf{y}(u_{i})_{e}=
\begin{cases}
d_{\Gamma}(u_{i})^{\frac{1}{2}}, & \text{~if~}v=u_{i},\\
-\frac{\delta_{i}}{d_{\Gamma}(u_{i})^{\frac{1}{2}}},& \text{~if~}vu_{i} \in E(\Gamma),\\
0,& \text{~otherwise},
\end{cases}
$$
and for $i>k-r$,
$$
\sum_{e\in E(\Gamma)}s(v,e)\mathbf{y}(u_{i})_{e}=
\begin{cases}
(d_{\Gamma}(u_{i})-1)^{\frac{1}{2}}, & \text{~if~}v=u_{i},\\		
-\frac{\delta_{i}}{(d_{\Gamma}(u_{i})-1)^{\frac{1}{2}}}, & \text{~if~}vu_{i}\in E(\Gamma)\setminus E(\Gamma[R]),\\				0, & \text{~otherwise}.
\end{cases}
$$
So, from \eqref{3},
$$
\mathbf{y}^{T}(u_{i})K(\Gamma)\mathbf{y}(u_{i})=
\begin{cases}
d_{\Gamma}(u_{i})+1, & 1\le i\le k-r,	\\[0.2cm]
d_{\Gamma}(u_{i}), & k-r<i\le k.
\end{cases}
$$
Since  $\left \{\mathbf{y}(u_{i}): 1\le i\le k  \right \} $ is an orthogonal set of vectors, by Lemma \ref{Cvt}, we have
$$
\sum_{i=1}^{k}\mu_{i}(\Gamma)\ge \sum_{i=1}^{k}\mathbf{y}^{T}(u_{i})K(\Gamma)\mathbf{y}(u_{i}) =\sum_{i=1}^{k}d_{\Gamma}(u_{i})+k-r.
$$

This completes the proof.
\qed
\end{Tproof}

\begin{cor}\emph{(See \cite{Hou})}
If $\Gamma=(G,\sigma)$ has an edge, then $\mu_{1}(\Gamma)\ge d_{1}(\Gamma)+1$.
\end{cor}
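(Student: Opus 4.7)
The plan is to derive this corollary as an immediate specialization of Theorem~\ref{LP1}. I would choose a vertex $u$ of $\Gamma$ attaining the maximum degree $d_1(\Gamma)$ and take $R=\{u\}$, so $k=1$. With only one vertex in $R$, the induced subgraph $\Gamma[R]$ has no edges, which forces $r=0$. Substituting these values into the conclusion of Theorem~\ref{LP1} yields
$$
\mu_1(\Gamma)\;\ge\;d_\Gamma(u)+(k-r)\;=\;d_\Gamma(u)+1\;=\;d_1(\Gamma)+1,
$$
which is exactly the desired inequality.

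The only loose ends are the standing hypotheses of Theorem~\ref{LP1}, namely that $\Gamma$ is connected and has order $n>2$. I would handle these by a routine reduction. If $\Gamma$ is disconnected, let $H$ be the connected component containing $u$; since the Laplacian spectrum of a disjoint union is the multiset union of the spectra of the components, one has $\mu_1(\Gamma)\ge \mu_1(H)$, while $d_H(u)=d_\Gamma(u)=d_1(\Gamma)$ because $u$ is a global maximum-degree vertex. As $\Gamma$ has an edge, $u$ has at least one neighbour, so $|V(H)|\ge 2$. When $|V(H)|>2$, I apply Theorem~\ref{LP1} to $H$ as above; when $|V(H)|=2$, $H$ is a single signed edge, whose Laplacian is $\bigl(\begin{smallmatrix}1&\mp 1\\ \mp 1&1\end{smallmatrix}\bigr)$ with eigenvalues $\{0,2\}$ regardless of the sign, giving $\mu_1(H)=2=d_H(u)+1$.

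There is no real obstacle here: the substance of the statement is entirely contained in Theorem~\ref{LP1}, and the corollary is just the single-vertex specialization together with a small amount of bookkeeping to cover the disconnected and order-two cases that the theorem's hypotheses exclude.
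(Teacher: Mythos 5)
Your proof is correct and follows essentially the same route as the paper: reduce to the connected component containing a maximum-degree vertex, handle the order-two (equivalently $d_1(\Gamma)=1$) case by direct computation, and otherwise apply Theorem~\ref{LP1} with $k=1$, $r=0$. The only cosmetic difference is that you justify $\mu_1(\Gamma)\ge\mu_1(H)$ via the multiset-union of component spectra, whereas the paper cites the edge-interlacing Lemma~\ref{Wang}; both are valid.
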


\begin{proof}
If $d_{1}(\Gamma)=1$, then every connected component of $\Gamma$ is either an isolated vertex or a copy of $K_{2}$. In this case, $\mu_{1}(\Gamma)=2$ and $d_{1}(\Gamma)=1$. So $\mu_{1}(\Gamma)=d_{1}(\Gamma)+1$.

If $d_{1}(\Gamma)>1$, let $w$ be a vertex of $\Gamma$ of degree $d_{1}(\Gamma)$. Let $(Z,\sigma)$ be a connected component of $\Gamma$ containing $w$. By Lemma \ref{Wang} and Theorem \ref{LP1}, $\mu_{1}(\Gamma)\ge \mu_{1}((Z,\sigma))\ge d_{1}(\Gamma)+1$.
\qed\end{proof}

\begin{cor}
Let $\Gamma=(G,\sigma)$ be a connected signed graph with $n>2$ vertices. Then
$$
\mu_{1}(\Gamma)+\mu_{2}(\Gamma)\ge d_{1}(\Gamma)+d_{2}(\Gamma)+1.
$$
If there are two nonadjacent vertices in $\Gamma$ having degree $d_{1}(\Gamma)$ and degree $d_{2}(\Gamma)$, then
$$
\mu_{1}(\Gamma)+\mu_{2}(\Gamma)\ge d_{1}(\Gamma)+d_{2}(\Gamma)+2.
$$
\end{cor}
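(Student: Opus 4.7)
The plan is to apply Theorem~\ref{LP1} with $k=2$, choosing $R=\{u_1,u_2\}$ where $u_1$ is a vertex of degree $d_1(\Gamma)$ and $u_2$ is a (distinct) vertex of degree $d_2(\Gamma)$; such a choice is possible because $n>2$ and the vertex degrees are listed as $d_1(\Gamma)\ge d_2(\Gamma)\ge\cdots\ge d_n(\Gamma)$.

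First I would split into two cases according to whether $u_1u_2\in E(\Gamma)$. If $u_1u_2\notin E(\Gamma)$, then $E(\Gamma[R])=\emptyset$ consists of $r=0$ pairwise disjoint edges, and Theorem~\ref{LP1} with $k=2$ yields
$$
\mu_1(\Gamma)+\mu_2(\Gamma)\ge d_\Gamma(u_1)+d_\Gamma(u_2)+2 = d_1(\Gamma)+d_2(\Gamma)+2,
$$
which is stronger than the first asserted inequality. If instead $u_1u_2\in E(\Gamma)$, then $E(\Gamma[R])$ consists of the single edge $u_1u_2$, which counts as $r=1$ pairwise disjoint edge, and Theorem~\ref{LP1} yields
$$
\mu_1(\Gamma)+\mu_2(\Gamma)\ge d_\Gamma(u_1)+d_\Gamma(u_2)+2-1 = d_1(\Gamma)+d_2(\Gamma)+1.
$$
Combining the two cases establishes the first displayed inequality.

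For the second statement, by hypothesis there exist two nonadjacent vertices $v_1,v_2$ with $d_\Gamma(v_1)=d_1(\Gamma)$ and $d_\Gamma(v_2)=d_2(\Gamma)$. Setting $R=\{v_1,v_2\}$, we have $E(\Gamma[R])=\emptyset$, so $r=0$, and Theorem~\ref{LP1} gives the sharper bound $\mu_1(\Gamma)+\mu_2(\Gamma)\ge d_1(\Gamma)+d_2(\Gamma)+2$ directly.

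There is no real obstacle in this argument: both claims are immediate specializations of Theorem~\ref{LP1} to $k=2$, with the only subtlety being the bookkeeping of $r\in\{0,1\}$ depending on whether the two selected vertices are adjacent. The nontrivial content sits entirely in Theorem~\ref{LP1} itself.
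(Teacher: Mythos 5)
Your proposal is correct and is essentially identical to the paper's argument: the paper likewise derives the corollary as an immediate specialization of Theorem~\ref{LP1} with $k=2$, noting that $r=1$ when the two chosen vertices are adjacent and $r=0$ otherwise. Your write-up merely spells out the bookkeeping that the paper leaves implicit.
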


\begin{proof}
The proof is immediate from Theorem \ref{LP1}. Either $r = 1$ or $r = 0$.
\qed
\end{proof}


\section*{Statements and Declarations}

\noindent \textbf{Competing interests}~~No potential competing interest was reported by the authors.

\medskip

\noindent \textbf{Data availability statements}~~Data sharing not applicable to this article as no datasets were generated or analysed during the current study.

\end{document}